\newtheorem{theorem}{Theorem}[section]
\newtheorem*{lemma}{Lemma}
\theoremstyle{remark}
\newtheorem{remark}{Remark}[section]
\begin{document}

\title{Cost effectiveness analysis of optimal control measures\\ for tuberculosis\thanks{This 
is a preprint of a paper whose final and definite form is published in the 
\emph{Bulletin of Mathematical Biology}, ISSN 0092-8240, {\tt http://dx.doi.org/10.1007/s11538-014-0028-6}. 
Submitted 04/July/2014; Revised 07/Sept/2014; Accepted 11/Sept/2014.}}


\author{Paula Rodrigues$^1$\\
{\tt pcpr@fct.unl.pt}
\and
Cristiana J. Silva$^2$\\
{\tt cjoaosilva@ua.pt}
\and
Delfim F. M. Torres$^2$\\
{\tt delfim@ua.pt}}

\date{$^1$Department of Mathematics and Center of Mathematics and Applications,\\
New University of Lisbon, 2829--516, Caparica, Portugal\\[0.3cm]
$^2$Center for Research and Development in Mathematics and Applications (CIDMA),
Department of Mathematics, University of Aveiro,\\ 3810--193 Aveiro, Portugal}

\maketitle


\begin{abstract}
We propose and analyse an optimal control problem where the control system
is a mathematical model for tuberculosis that considers reinfection.
The control functions represent the fraction of early latent and persistent
latent individuals that are treated. Our aim is to study how these control
measures should be implemented, for a certain time period, in order to reduce
the number of active infected individuals, while minimizing the interventions
implementation costs. The optimal intervention is compared along different
epidemiological scenarios, by varying the transmission coefficient.
The impact of variation of the risk of reinfection, as a result of acquired
immunity to a previous infection for treated individuals on the optimal controls
and associated solutions, is analysed. A cost-effectiveness analysis is done,
to compare the application of each one of the control measures,
separately or in combination.

\bigskip

\noindent {\bf Keywords:} tuberculosis; optimal control;
post-exposure interventions; efficacy function; cost effort.

\smallskip

\noindent {\bf Mathematics Subject Classification 2010:} 92D30; 49M05.
\end{abstract}


\section{Introduction}
\label{sec:Int}

Tuberculosis (TB) detection and treatment saved 22 million of lives, between 1995 and 2012,
following the 2013 report of the World Health Organization (WHO) \cite{WHO_2013}. However,
in 2012, there were 8.6 million of new TB cases and 1.3 million of TB deaths \cite{WHO_2013}.
TB prevention, diagnosis and treatment, requires adequate funding, sustained over many years,
which represents a worldwide scale challenge.

Mathematical dynamic models are an important tool in analyzing the spread and control
of infectious diseases. Many TB mathematical models have been developed --- see, e.g.,
\cite{Blower_Small_Hopewell_1996,Castillo_Feng_1998,Cohen_Murray_2004,Dye_et_all_1998,Gomes_et_all_2004,Vynnycky_Fine_1997}
and the references cited therein. The main differences of the models proposed in
\cite{Castillo_Feng_1997,Cohen_Colijn_Finklea_Murray_2006,Cohen_Murray_2004,Dye_et_all_1998,Gomes_et_all_2004,Gomes_etall_2007,%
Rie_et_all_2005,Rodrigues_et_all_2007,Vynnycky_Fine_1997,Warren_et_all_2004} are the way they represent reinfection,
since there is no consensus on wether a previous infection gives or not protection. The way recently infected individuals
progress to active disease is not the same in all models: they can be ``fast progressors'' or ``slow progressors''.
In some models, it is assumed that only 5 to 10\% of the infected individuals are fast progressors. The remaining models
consider that individuals are able to contain the infection asymptomatic and non infectiously (latent individuals),
having a much lower probability of developing active disease by endogenous reactivation. More recent models also assume exogenous
reinfection of latent and treated individuals, based on the fact that infection and/or disease do not confer full protection
\cite{Verver_etall_2005}. This assumption has an important impact on the efficacy of interventions
\cite{Cohen_Murray_2004,Gomes_et_all_2004,Rodrigues_et_all_2007,MyID:265,MyID:230,MyID:271,Vynnycky_Fine_1997}.
In this paper, we consider a TB mathematical model from \cite{Gomes_etall_2007},
where exogenous reinfection is considered.

Without treatment, TB mortality rates are hight \cite{WHO_2013}. Different interventions are available
for TB prevention and treatment: vaccination to prevent infection; treatment to cure active TB;
treatment of latent TB to prevent endogenous reactivation. In this work, we study the implementation
of two post-exposure interventions that are not widely used: treatment of early latent individuals
with anti-TB drugs (e.g., treatment of recent contacts of index cases) and prophylactic treatment/vaccination
of the persistent latent individuals. We propose an optimal control problem that consists in analyzing
how these two control measures should be implemented, for a certain time period, in order to reduce
the number of active infected individuals, while controlling the interventions implementation costs.

Optimal control is a branch of mathematics developed to find optimal ways to control a dynamic system
\cite{Cesari_1983,Fleming_Rishel_1975,Pontryagin_et_all_1962}. Other authors applied optimal control
theory to TB models (see, e.g., \cite{SLenhart_2002,TB:Cameroon,MyID:230}). This approach allows the study of the most cost-effective
intervention design by generating an implementation design that minimizes an objective function. The intensity
of interventions can be relaxed along time, which is not the case considered in most models,
for which interventions are modeled by constant rates \cite{Gomes_etall_2007}.

The paper is organized as follows. In Section~\ref{sec:model} we present the mathematical
model for TB that will be study in this paper. Two control functions $u_1$ and $u_2$ are
then added to the original model from \cite{Gomes_etall_2007}. Section~\ref{sec:oc:problem}
is dedicated to the formulation of the optimal control problem. We prove the existence of an
unique solution and derive the expression for the optimal controls according
to the Pontryagin maximum principle \cite{Pontryagin_et_all_1962}.
Section~\ref{sec:numericalresults} has four subsections dedicated to a numerical
and cost-effectiveness analysis of the optimal control problem. We start by illustrating
the problem solutions for a particular case (Section~\ref{subsec:example}).
We then introduce some summary measures in Section~\ref{subsec:smeasures}
to describe how the results change when varying transmission intensity
(Section~\ref{subsec:beta}) and protection against reinfection (Section~\ref{subsec:sigma}).
In Section~\ref{subsec:oc:strategies}, we analyze the cost-effectiveness of three intervention strategies:
applying $u_1$ or $u_2$ separately and applying the two control measures simultaneously.
We end with Section~\ref{sec:discussion} of discussion.


\section{Mathematical model}
\label{sec:model}

Following the model proposed in \cite{Gomes_etall_2007}, population is divided into five categories:
susceptible ($S$); early latent ($L_1$), \textrm{i.e.}, individuals recently infected
(less than two years) but not infectious; infected ($I$), \textrm{i.e.},
individuals who have active TB and are infectious;
persistent latent ($L_2$), \textrm{i.e.}, individuals who were infected and remain latent;
and recovered ($R$), \textrm{i.e.}, individuals who were previously infected and treated.

We assume that at birth all individuals are equally susceptible
and differentiate as they experience infection and respective therapy.
The rate of birth and death, $\mu$, are equal (corresponding to a mean life
time of 70 years \cite{Gomes_etall_2007}) and no disease-related deaths are considered,
keeping the total population, $N$, constant with $N = S(t) + L_1(t) + I(t) + L_2(t) + R(t)$.

Parameter $\delta$ denotes the rate at which individuals leave $L_1$ compartment;
$\phi$ is the proportion of infected individuals progressing directly to the active
disease compartment $I$; $\omega$ and $\omega_R$ are the rates of endogenous reactivation
for persistent latent infections (untreated latent infections) and for treated individuals
(for those who have undergone a therapeutic intervention), respectively.
Parameters $\sigma$ and $\sigma_R$ are factors that reduce the risk of infection,
as a result of acquired immunity to a previous infection, for persistent latent individuals
and for treated patients, respectively. These factors affect the rate of exogenous reinfection.
As in \cite{Gomes_etall_2007}, in our simulations we consider three different cases
for the protection against reinfection conferred by treatment: same protection
as natural infection ($\sigma_R = \sigma$); lower protection than conferred
by infection ($\sigma_R = 2\sigma$); and higher protection than conferred by infection
($\sigma_R =\sigma/2$), see Section~\ref{subsec:sigma}. Parameter $\tau_0$ is the rate
of recovery under standard treatment of active TB, assuming an average duration
of infectiousness of six months. The values of the rates $\delta$, $\phi$, $\omega$,
$\omega_R$, $\sigma$ and $\tau_0$ are taken from \cite{Gomes_etall_2007}
and the references cited therein (see Table~\ref{parameters} for the values of the parameters).

Additional to standard treatment of infectious individuals, we consider two post-exposure
interventions targeting different sub-populations: early detection and treatment
of recently infected individuals ($L_1$) and chemotherapy or post-exposure vaccine
of persistent latent individuals ($L_2$). These interventions are applied at rates
$\tau_1$ and $\tau_2$. We consider, without loss of generality, that the rate of recovery
of early latent individuals under post-exposure interventions is equal to the rate of recovery
under treatment of active TB, $\tau_1 = 2 \, yr^{-1}$, and greater than the rate of recovery
of persistent latent individuals under post-exposure interventions, $\tau_2 = 1 \, yr^{-1}$
\cite{Gomes_etall_2007}. Since we are interested in studying these interventions along time,
we add to the original model two control functions, $u_1(\cdot)$ and $u_2(\cdot)$,
which represent the intensity at which these post-exposure interventions are applied at each time step.

The dynamical control system that we propose is given by
\begin{equation}
\label{modelGab_controls}
\begin{cases}
\dot{S}(t) = \mu N - \frac{\beta}{N} I(t) S(t) - \mu S(t)\\
\dot{L_1}(t) = \frac{\beta}{N} I(t)\left( S(t) + \sigma L_2(t)
+ \sigma_R R(t)\right) - (\delta + \tau_1 u_1(t) + \mu)L_1(t)\\
\dot{I}(t) = \phi \delta L_1(t) + \omega L_2(t) + \omega_R R(t)
- (\tau_0  + \mu) I(t)\\
\dot{L_2}(t) = (1 - \phi) \delta L_1(t) - \sigma \frac{\beta}{N} I(t) L_2(t)
- (\omega + \tau_2  u_2(t) + \mu)L_2(t)\\
\dot{R}(t) = \tau_0 I(t) +  \tau_1 u_1(t)L_1(t)
+ \tau_2 u_2(t) L_2(t)
- \sigma_R \frac{\beta}{N} I(t) R(t) - \left(\omega_R + \mu\right) R(t) \, .
\end{cases}
\end{equation}

\begin{remark}
The assumption that the total population $N$ is constant, allows to reduce
the control system \eqref{modelGab_controls} from five to four state variables.
We decided to maintain the TB model in form \eqref{modelGab_controls},
using relation $S(t) + L_1(t) + I(t) + L_2(t) + R(t) = N$ as a test
to confirm the numerical results.
\end{remark}

\begin{table}[!htb]
\centering
\begin{tabular}{|l | l | l |}
\hline
{\normalsize{Symbol}} & {\normalsize{Description}}  & {\normalsize{Value}} \\
\hline
{\normalsize{$\beta$}} & {\normalsize{Transmission coefficient}}
& {\normalsize{variable }}\\
{\normalsize{$\mu$}} & {\normalsize{Death and birth rate}}
& {\normalsize{$1/70 \, yr^{-1}$}}\\
{\normalsize{$\delta$}} & {\normalsize{Rate at which individuals leave $L_1$}}
& {\normalsize{$12 \, yr^{-1}$}}\\
{\normalsize{$\phi$}} & {\normalsize{Proportion of individuals going to $I$}}
& {\normalsize{$0.05$}}\\
{\normalsize{$\omega$}} & {\normalsize{Rate of endogenous reactivation for persistent latent infections}}
& {\normalsize{$0.0002 \, yr^{-1}$}}\\
{\normalsize{$\omega_R$}} & {\normalsize{Rate of endogenous reactivation for treated individuals}}
&{\normalsize{$0.00002 \, yr^{-1}$}}\\
{\normalsize{$\sigma$}} & {\normalsize{Factor reducing the risk of infection as a result of acquired}}  & \\
& {\normalsize{immunity to a previous infection for $L_2$}} & {\normalsize{$0.25$}} \\
{\normalsize{$\sigma_R$}} & {\normalsize{Rate of exogenous reinfection of treated patients}}
& {\normalsize{$\sigma; 2\sigma; \sigma/2$}} \\
{\normalsize{$\tau_0$}} & {\normalsize{Rate of recovery under treatment of active TB}}
&  {\normalsize{$2 \, yr^{-1}$}}\\
{\normalsize{$\tau_1$}} & {\normalsize{Rate of recovery under treatment of latent individuals $L1$}}
&  {\normalsize{$2 \, yr^{-1}$}}\\
{\normalsize{$\tau_2$}} & {\normalsize{Rate of recovery under treatment of latent individuals $L2$}}
&  {\normalsize{$1 \, yr^{-1}$}}\\
{\normalsize{$N$}} & {\normalsize{Total population}} & {\normalsize{$30000$}} \\
{\normalsize{$t_f$}} & {\normalsize{Total simulation duration}} & {\normalsize{5 yr}} \\
{\normalsize{$W_0$}} & {\normalsize{Weight constant on active infectious individuals $I(t)$}} & {\normalsize{$50$}}\\
{\normalsize{$W_1$}} & {\normalsize{Weight constant on control $u_1(t)$}}
& {\normalsize{$50$}}\\
{\normalsize{$W_2$}} & {\normalsize{Weight constant on control $u_2(t)$}}
& {\normalsize{$50$}}\\
\hline
\end{tabular}
\caption{Parameter values for the control system \eqref{modelGab_controls}.}
\label{parameters}
\end{table}

It is assumed that the rate of infection of susceptible individuals is proportional
to the number of infectious individuals and the constant of proportionality is $\beta$,
which is the transmission coefficient. The basic reproduction number $R_0$,
for system \eqref{modelGab_controls} in the absence of post-exposure interventions, i.e.,
in the case $u_1 = u_2 = 0$, is proportional to the transmission coefficient $\beta$
(see \cite{Gomes_etall_2007}) and is given by
\begin{equation*}
R_0 = \beta \frac{\delta (\omega + \phi \mu)(\omega_R
+ \mu)}{\mu(\omega_R + \tau_0 + \mu)(\delta + \mu)(\omega + \mu)} \, .
\end{equation*}
The endemic threshold ($ET$) at $R_0 = 1$ indicates the minimal transmission
potential that sustains endemic disease, \textrm{i.e.}, when $R_0 < 1$
the disease will die out and for $R_0 > 1$ the disease may become endemic.
Since our model considers reinfection and post-exposure interventions,
the reinfection threshold $RT$ becomes important. It corresponds to critical
transmissibility values above which there is a steep nonlinear increase
in disease prevalence, corresponding to the increase contribute of
reinfection cases to the disease load. The $RT$ for the system \eqref{modelGab_controls},
in the absence of post-exposure interventions, has been computed in \cite{Gomes_etall_2007}.


\section{Optimal control problem}
\label{sec:oc:problem}

TB control is still a common problem around the world. In order to have the desire impact,
TB control measures must be timely applied. However, economical, social and environmental
constraints are imposed to TB control measures. The ideal situation would be a minimization
of active infected individuals with the lowest cost possible.
Optimal control theory is a powerful mathematical tool that can be used
to make decisions in this situation \cite{kar_Jana_2012}.

We consider the state system \eqref{modelGab_controls}
of ordinary differential equations in $\mathbb{R}^5$
with the set of admissible control functions given by
\begin{equation*}
\Omega = \left\{ (u_1(\cdot), u_2(\cdot)) \in (L^{\infty}(0, t_f))^2 \,
| \,  0 \leq u_1 (t), u_2(t) \leq 1 ,  \, \forall \, t \in [0, t_f] \, \right\} .
\end{equation*}
Our aim is to minimize the number of active infected individuals $I$ as well as the costs
required to control the disease by treating early and persistent latent individuals,
$L_1$ and $L_2$. The objective functional is given by
\begin{equation}
\label{costfunction}
\mathcal{J}(u_1(\cdot), u_2(\cdot)) = \int_0^{t_f} \left[ W_0 I(t)
+ \frac{W_1}{2}u_1^2(t) + \frac{W_2}{2}u_2^2(t) \right] dt \, ,
\end{equation}
where the constants $W_i$, $i=1, 2$, are a measure
of the relative cost of the interventions
associated to the controls $u_1$ and $u_2$, respectively,
and the constant $W_0$ is the weight constant for classe $I$.

We consider the optimal control problem of determining
$\left(S^*(\cdot), L_1^*(\cdot), I^*(\cdot), L_2^*(\cdot), R^*(\cdot)\right)$,
associated to an admissible control pair
$\left(u_1^*(\cdot), u_2^*(\cdot) \right) \in \Omega$ on the time interval $[0, t_f]$,
satisfying \eqref{modelGab_controls}, given initial conditions
$S(0)$, $L_1(0)$, $I(0)$, $L_2(0)$ and $R(0)$ and
minimizing the cost function \eqref{costfunction}, \textrm{i.e.},
\begin{equation}
\label{mincostfunct}
\mathcal{J}(u_1^*(\cdot), u_2^*(\cdot))
= \min_{\Omega} \mathcal{J}(u_1(\cdot), u_2(\cdot)) \, .
\end{equation}

In \ref{app:theo:A} we prove the following existence and uniqueness result.

\begin{theorem}
\label{the:thm}
Problem \eqref{modelGab_controls}--\eqref{mincostfunct} with given initial conditions
$S(0)$, $L_1(0)$, $I(0)$, $L_2(0)$ and $R(0)$ and fixed final time $t_f$, admits an unique
optimal solution $\left(S^*(\cdot), L_1^*(\cdot), I^*(\cdot), L_2^*(\cdot), R^*(\cdot)\right)$
associated to an optimal control pair $\left(u_1^*(\cdot), u_2^*(\cdot)\right)$ on $[0, t_f]$.
\end{theorem}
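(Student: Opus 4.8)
The plan is to split Theorem~\ref{the:thm} into an existence part, obtained from a classical compactness theorem, and a uniqueness part, obtained from a Gronwall estimate on the optimality system. The common preliminary step is to control the state trajectories.

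First I would establish \emph{a priori bounds}. For any admissible pair $(u_1(\cdot),u_2(\cdot))\in\Omega$ and any nonnegative initial data with $S(0)+L_1(0)+I(0)+L_2(0)+R(0)=N$, summing the five equations in \eqref{modelGab_controls} gives $\dot N\equiv 0$, and on each coordinate hyperplane $\{x_i=0\}$ the corresponding component of the vector field is nonnegative; hence the compact simplex $\Delta=\{x\in\mathbb{R}^5_{\ge 0}:\ S+L_1+I+L_2+R=N\}$ is positively invariant. Consequently every state component stays in $[0,N]$ on $[0,t_f]$, the right-hand side of \eqref{modelGab_controls} is Lipschitz in $x$ uniformly in $(u_1,u_2)\in\Omega$, and by standard ODE theory each admissible control produces a unique bounded trajectory; in particular $\Omega$ is nonempty (take $u_1=u_2=0$) and so is the set of admissible state--control pairs.

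Next, \emph{existence of an optimal pair}, which I would get from the classical existence theorem for optimal control problems (Cesari; Fleming and Rishel) by verifying its hypotheses: the pointwise control set $[0,1]^2$ is nonempty, compact and convex; the velocity in \eqref{modelGab_controls} is affine in $(u_1,u_2)$, so the set of velocities at each $(t,x)$ is convex, and it is bounded by $C(1+|x|+|u|)$ on $\Delta$; the admissible trajectories all stay in the fixed compact $\Delta$; and the integrand $L(x,u)=W_0 I+\tfrac{W_1}{2}u_1^2+\tfrac{W_2}{2}u_2^2$ of \eqref{costfunction} is convex in $(u_1,u_2)$ for each $x$ and coercive, since $L(x,u)\ge \tfrac12\min\{W_1,W_2\}\,(u_1^2+u_2^2)$, which is the required lower bound $c_1|u|^{\beta}-c_2$ with $\beta=2>1$. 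A minimizing sequence then has a subsequence whose controls converge weak-$\ast$ in $(L^\infty(0,t_f))^2$ and whose states converge uniformly, and lower semicontinuity of $\mathcal{J}$ yields a pair attaining the minimum in \eqref{mincostfunct}.

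Finally, \emph{uniqueness}. Pontryagin's maximum principle (derived in the next section) expresses any optimal control pointwise as a Lipschitz --- indeed nonexpansive --- function of the state and the adjoint variables, so it suffices to show that the optimality system --- \eqref{modelGab_controls} run forward, the adjoint equations run backward with the transversality conditions $\lambda_i(t_f)=0$, and the controls replaced by those projection formulas --- has at most one solution. Subtracting two such solutions, I would test the resulting system (linear in the differences, with coefficients that are Lipschitz because $x$ and $\lambda$ are bounded on $[0,t_f]$) against the differences themselves with an exponential weight $e^{\pm\kappa t}$ and apply Gronwall's inequality to force the difference to vanish. The delicate point, and the main obstacle, is precisely this last estimate: the Gronwall constant grows with the $L^\infty$ bounds and with the length of the interval, so one must use that $\Delta$ confines the state (keeping those constants uniform) and that $t_f$ is fixed to close it. Collecting the three parts proves Theorem~\ref{the:thm}.
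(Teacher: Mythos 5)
Your overall architecture matches the paper's: existence via the classical Cesari/Fleming--Rishel theorem (convex integrand in the controls, dynamics affine in $(u_1,u_2)$, compact convex control set, states confined to a compact invariant region), and uniqueness through the forward--backward optimality system furnished by the Pontryagin maximum principle. The a priori bounds and the existence half are fine, and in fact more carefully argued than the paper's one-sentence version of that step.

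The gap is in the uniqueness step. For the coupled system consisting of the state equations (run forward from data at $t=0$) and the adjoint equations (run backward from $\lambda_i(t_f)=0$), with the controls replaced by the projection formulas \eqref{optcontrols}, the standard subtraction-plus-exponential-weight argument does \emph{not} close for an arbitrary fixed $t_f$: after substituting $\lambda_i=e^{\kappa t}w_i$ and integrating, one arrives at an inequality of the form $\bigl(\kappa - C_1 - C_2\,e^{3\kappa t_f}\bigr)\int(\cdots)\,dt\le 0$, and the bracket can be made positive only when $t_f$ is small enough; for large $t_f$ no choice of $\kappa$ works, since increasing $\kappa$ inflates the exponential term faster than the linear gain. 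Confinement of the state to the simplex keeps $C_1,C_2$ uniform but does not remove this obstruction --- the two-point boundary structure is precisely what prevents a one-sided Gronwall argument on a fixed interval. The paper is explicit about this: the Lemma in \ref{app:theo:A} asserts uniqueness of the pair \eqref{optcontrols} only for \emph{small} final time $t_f$, and the proof of Theorem~\ref{the:thm} then extends uniqueness to arbitrary $t_f$ by a separate observation, namely that the state system \eqref{modelGab_controls} is autonomous. Your write-up asserts that fixing $t_f$ and bounding the states suffices to close the estimate; it does not, and you would need either to restrict to small $t_f$ and add the extension argument, or to supply a genuinely different uniqueness mechanism.
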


The optimal control pair predicted by Theorem~\ref{the:thm}
represents the optimal intervention strategy, given the cost constraints,
and can be found by application of the celebrated Pontryagin maximum
principle \cite{Pontryagin_et_all_1962} (Lemma in \ref{app:theo:A})
and appropriate numerical methods \cite{MyID:287}.


\section{Numerical results and cost-effectiveness analysis}
\label{sec:numericalresults}

Different approaches were used to obtain and confirm the numerical results.
One approach consisted in using IPOPT \cite{IPOPT}
and the algebraic modeling language AMPL \cite{AMPL}.
A second approach was to use the PROPT Matlab Optimal Control Software \cite{PROPT}.
The results coincide with the ones obtained by an iterative method that consists
in solving the system of ten ODEs given by \eqref{modelGab_controls} and \eqref{adjoint_function}
(Lemma in \ref{app:theo:A}). For that, first we solve system \eqref{modelGab_controls} with a guess for the controls
over the time interval $[0, T]$ using a forward fourth-order Runge--Kutta scheme
and the transversality conditions $\lambda_i(T) = 0$, $i=1, \ldots, 5$.
Then, system \eqref{adjoint_function} is solved by a backward fourth-order Runge--Kutta scheme
using the current iteration solution of \eqref{modelGab_controls}. The controls are updated
by using a convex combination of the previous controls and the values from \eqref{optcontrols}.
The iteration is stopped when the values of the unknowns at the previous iteration
are very close to the ones at the present iteration.

In the following sections all parameters are fixed according to Table~\ref{parameters},
with exception to the transmission parameter $\beta$ and the reinfection parameter
for treated individuals $\sigma_R$, which are varied to illustrate different scenarios.
The initial conditions are obtained as the nontrivial equilibria values for the system
\eqref{modelGab_controls} with no controls ($u_1=0=u_2$), corresponding to the population
state before the introduction of post-exposure interventions.


\subsection{An example of optimal control for a period of five years}
\label{subsec:example}

For illustration, we fix all parameters according to Table~\ref{parameters}.
We start by considering  $\beta=100$ and the simplest case where latent
($L_1$ and $L_2$) and recovered ($R$) individuals have the same protection
against reinfection, i.e., $\sigma_R=\sigma$. Both these assumptions will
be relaxed latter on, in Sections~\ref{subsec:beta} and \ref{subsec:sigma}.
Initial conditions are given in Table~\ref{icbeta100}.
\begin{table}[!htb]
\centering
\begin{tabular}{|r |  r | r | r | r | r |}
\hline
 {\normalsize{$S(0)$}} & {\normalsize{$L_1(0)$}} & {\normalsize{$I(0)$}} & {\normalsize{$L_2(0)$}} & {\normalsize{$R(0)$}} \\
\hline
 {\normalsize{$4\ 554$}} & {\normalsize{$ 72$}} & {\normalsize{$ 24$}} & {\normalsize{$ 23\ 950$}} & {\normalsize{$ 1\ 400$}} \\
\hline
\end{tabular}
\caption{Initial conditions for system \eqref{modelGab_controls} with parameters according
to Table~\ref{parameters} and for $\beta=100$ and $\sigma_R=\sigma$. The values are obtained
as the endemic equilibria values for \eqref{modelGab_controls} before the introduction
of post-exposure interventions (i.e., $u_1=0=u_2$).}
\label{icbeta100}
\end{table}
The solution for the optimal control problem is illustrated in
Figure~\ref{fig:cont:I:Eff:beta100:sigual} (a) and (b). During
the five years, for which the interventions lasts, the number
of infectious individuals decreases and both interventions can
be relaxed along time. Treatment intensity of the persistent latent individuals
$u_2$ must be maximum during the initial 2 years and then can be progressively reduced.
Treatment of early latent individuals $u_1$ should stay longer
at its maximum intensity, for approximately 4 years.
Figure~\ref{fig:cont:I:Eff:beta100:sigual} (c)
shows the efficacy function defined by
\begin{equation}
\label{eff}
E(t) =\frac{I(0)-I^*(t)}{I(0)}= 1 - \frac{I^*(t)}{I(0)},
\end{equation}
where $I^*(t)$ is the optimal solution associated to the optimal controls
and $I(0)$ is the corresponding initial condition. This function measures
the proportional decrease in the number of infectious individuals imposed
by the intervention with controls $(u_1, u_2)$, by comparing the number
of infected individuals at time $t$ with the initial value $I(0)$ for which
there are no controls implemented ($u_1 = u_2 =0$). By construction,
$E(t) \in [0, 1]$ for all time $t$ and the efficacy is highest when $E(t)$ is one.
Note that $E(t)$ has the contrary tendency of $I(t)$.
\begin{figure}[ht]
\centering
\subfloat[]
{\label{controls:beta100:sigual}
\includegraphics[scale=0.37]{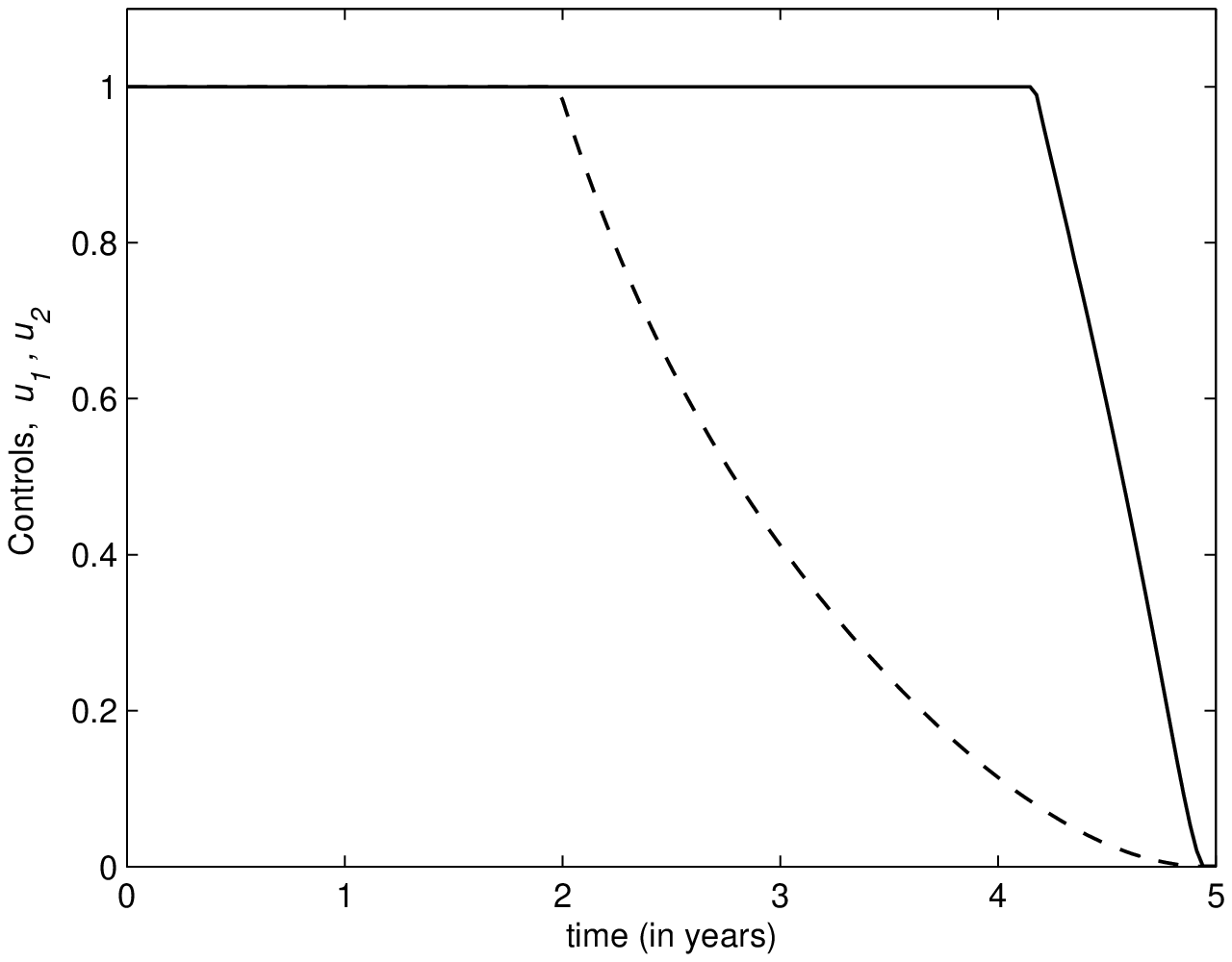}}
\subfloat[]
{\label{Inf:beta100:sigual}
\includegraphics[scale=0.37]{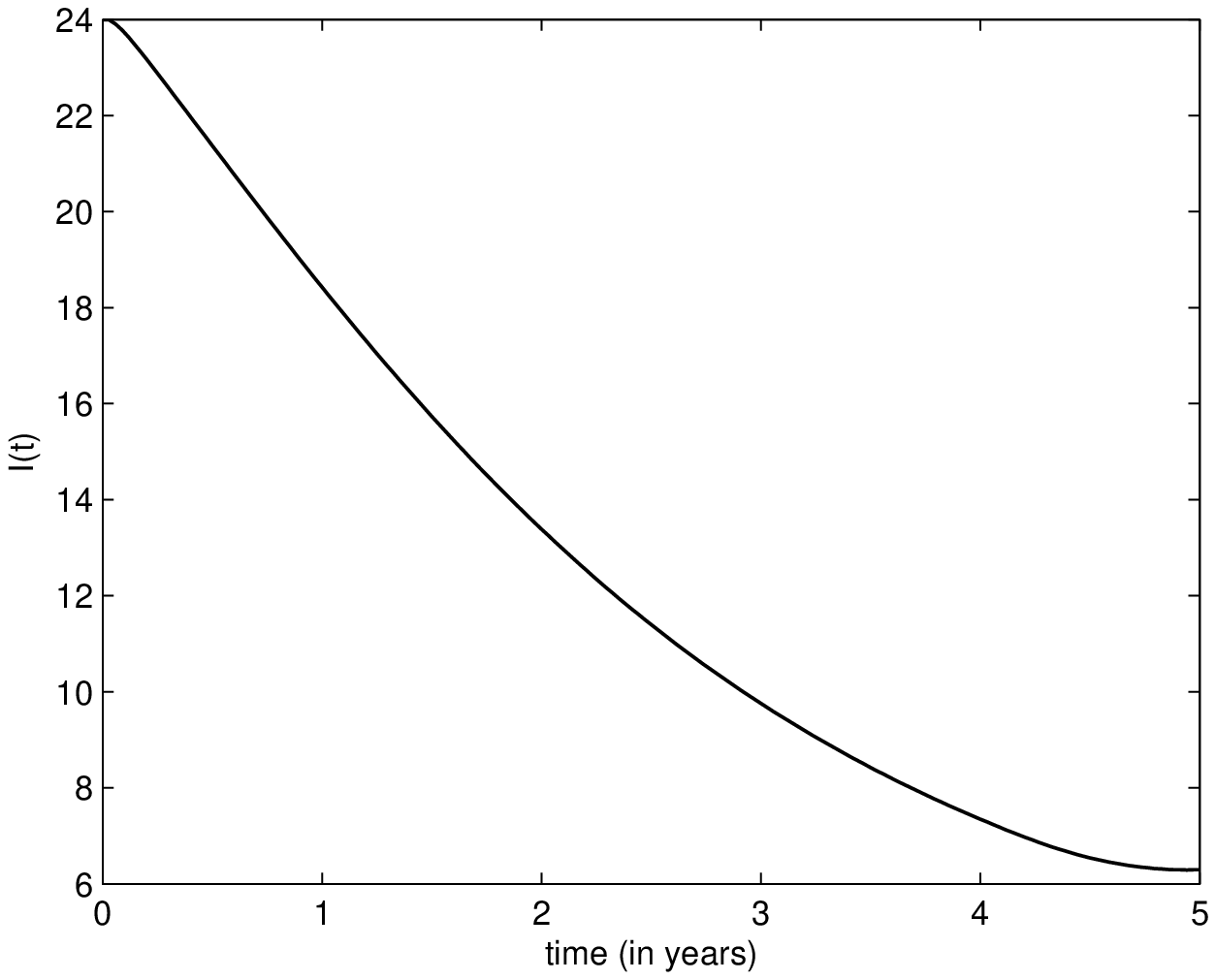}}
\subfloat[]
{\label{Eff:beta100:sigual}
\includegraphics[scale=0.37]{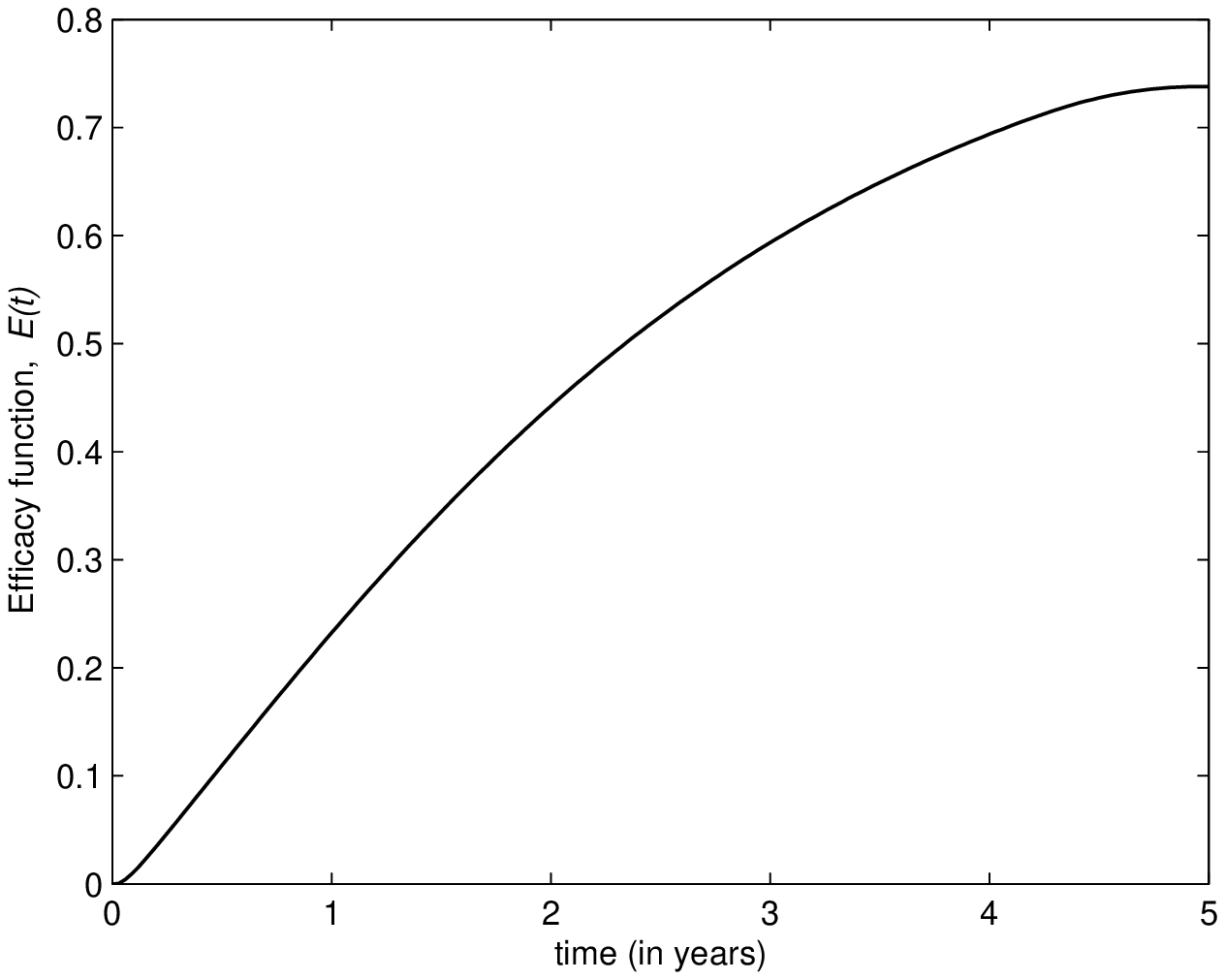}}
\caption{Solution for the optimal control problem \eqref{modelGab_controls}--\eqref{mincostfunct},
assuming $t_f=5$, $\beta = 100$ and $\sigma_R = \sigma$. {\bf (a)} Optimal control
pair $u_1$ (continuous line) and $u_2$ (dashed line).
{\bf (b)} Number of infectious individuals along time.
{\bf (c)} Efficacy function $E(t)$, defined by \eqref{eff}.}
\label{fig:cont:I:Eff:beta100:sigual}
\end{figure}

Naturally, the results depend on the objective function $\mathcal{J}$ given by \eqref{costfunction}.
In particular, they depend on the duration of the intervention $t_f$ and on the weight
constants associated with the amount of infectious individuals $W_0$
and with the costs of controls $W_i$, $i=1,2$. WHO goals are usually fixed for five years periods,
so in what follows, we assume $t_f=5$ years. Moreover, for higher values of $t_f$
($t_f \in \{10,\ldots, 25 \}$) we can observe that the number of infected individuals
starts to increase towards the end of the intervention (\ref{app:tf:B}). In practical terms,
this would mean that the intervention should be revised before its end. Results do not change qualitatively
by varying constants $W_i$, $i=0,1,2$. However, the magnitude of the efficacy changes more significantly
in the cases where $W_0$ and $W_1=W_2$ are varied independently. Generally, efficacy decreases when the costs
$W_1$ and $W_2$ increase, corresponding to earlier relaxation of the intensity of treatment ($u_1(t), u_2(t)$)
in the optimal solution. More details can be found in \ref{app:Wis:C}.

More importantly, these results will change depending on the epidemiological scenario we consider.
In the next subsections we vary the transmission coefficient $\beta$
and on the protection conferred by treatment $\sigma_R$.


\subsection{Summary measures}
\label{subsec:smeasures}

We introduce some summary measures to evaluate the cost and the effectiveness
of the proposed control measures for the entire intervention period,
for different epidemiological scenarios.

For each $\beta$ and $\sigma_R$ fixed, the total cases averted by the intervention
during the time period $t_f$ is given by
\begin{equation}
\label{A:beta:sigmaR}
A(\beta, \sigma_R)=t_f I(0; \beta, \sigma_R)-\int_0^{t_f} I^*(t; \beta, \sigma_R) dt,
\end{equation}
where, for each $\beta$ and $\sigma_R$ fixed, $I^*(t; \beta, \sigma_R)=I^*(t)$
is the optimal solution associated to the optimal controls ($u_1^*, u_2^*$) and
$I(0;\beta,\sigma_R)=I(0)$ is the corresponding initial condition.
Note that this initial condition is obtained as the equilibrium proportion $\overline{I}(\beta,\sigma_R)$
of system \eqref{modelGab_controls} with no post-exposure intervention ($u_1=u_2=0$),
which does not depend on time, so $t_f I(0; \beta, \sigma_R)=\int_0^{t_f}\overline{I}(\beta,\sigma_R)dt$
represents the total infectious cases over a period of $t_f$ years.

We define effectiveness as the proportion of cases averted
on the total cases possible under no intervention:
\begin{equation}
\label{eq:Ebarra}
\overline{E}(\beta, \sigma_R) =\frac{A(\beta, \sigma_R)}{t_f I(0; \beta, \sigma_R)}
= 1 - \frac{\displaystyle\int_0^{t_f} I^*(t; \beta, \sigma_R) dt}{t_f I(0; \beta, \sigma_R)}.
\end{equation}
We choose dimensionless measures for effectiveness
to be able to compare different epidemiological scenarios.

The total cost associated to the intervention is
\begin{equation}
\label{eq:TC}
TC(\beta,\sigma_R) = \int_0^{t_f} C_1 u^*_1(t)L^*_1(t) + C_2u_2^*(t)L^*_2(t) dt,
\end{equation}
where $C_i$ correspond to the per person unit cost of the two possible interventions:
detection and treatment of early latent individuals ($C_1$)
and chemotherapy/vaccination of persistent latent individuals ($C_2$).
Following \cite{Okosun_etall_2013}, we define the average cost-effectiveness ratio by
\begin{equation}
\label{A}
ACER=\frac{TC}{A}.
\end{equation}
Typically, optimal solutions correspond to maximum intensity of intervention
for a certain period followed by relaxation, as in the example in Section~\ref{subsec:example}.
So, we use the time at which the intensity of each intervention is relaxed as another way
to evaluate the effort associated with an optimal solution:
\begin{equation*}
tr_{i}=tr_{i}(\beta, \sigma_R)=\max \{t\in[0,t_f]: u_i(t;\beta,\sigma_R)=1 \},
\quad i=1,2.
\end{equation*}
We refer to these as {\it relaxation-times}.
Table~\ref{table:sm:beta100:sigual} summarizes the particular case
analyzed in the previous section, $\beta=100$ and $\sigma_R=\sigma$.
\begin{table}[!htb]
\centering
\begin{tabular}{|c|c|c|c|c|c|c|}
\hline
$\beta$ & {\normalsize{$\displaystyle\overline{A}$}} & {\normalsize{$TC$}}& {\normalsize{$ACER$}}
&  {\normalsize{$\overline{E}$}} & {\normalsize{$t_{r_1}$}} &  {\normalsize{$t_{r_2}$}} \\ \hline
{\normalsize{100}}   & {\normalsize{$56$}} & {\normalsize{$ 23\ 374$}} & {\normalsize{$417$}}
& {\normalsize{$0.4691 $}} & {\normalsize{$4.1765$}} & {\normalsize{$2.0$}}\\ \hline
\end{tabular}
\caption{Summary of cost-effectiveness measures for $\beta=100$ and $\sigma_R = \sigma$.}
\label{table:sm:beta100:sigual}
\end{table}


\subsection{Impact of transmission intensity on optimal control interventions}
\label{subsec:beta}

First we compare model results for different epidemiological scenarios
in terms of transmission intensity, by varying parameter $\beta$. For now,
we assume that protection conferred by natural infection or by treatment
is the same ($\sigma_R=\sigma$). The remaining parameters
are fixed according to Table~\ref{parameters}.

Figure~\ref{fig:Ebar:Cbar:Tri:betavariar:sigual} represents effectiveness $\overline{E}$
and relaxation-times $t_{r_i}$, $i=1,2$, for the optimal control measures,
when varying transmission intensity $\beta$.
\begin{figure}[!ht]
\centering
\subfloat[\footnotesize{$\overline{E}$ for variable  $\beta$ ($\sigma_R = \sigma$).}]
{\label{Ebar:betavariar:sigual}
\includegraphics[scale=0.65]{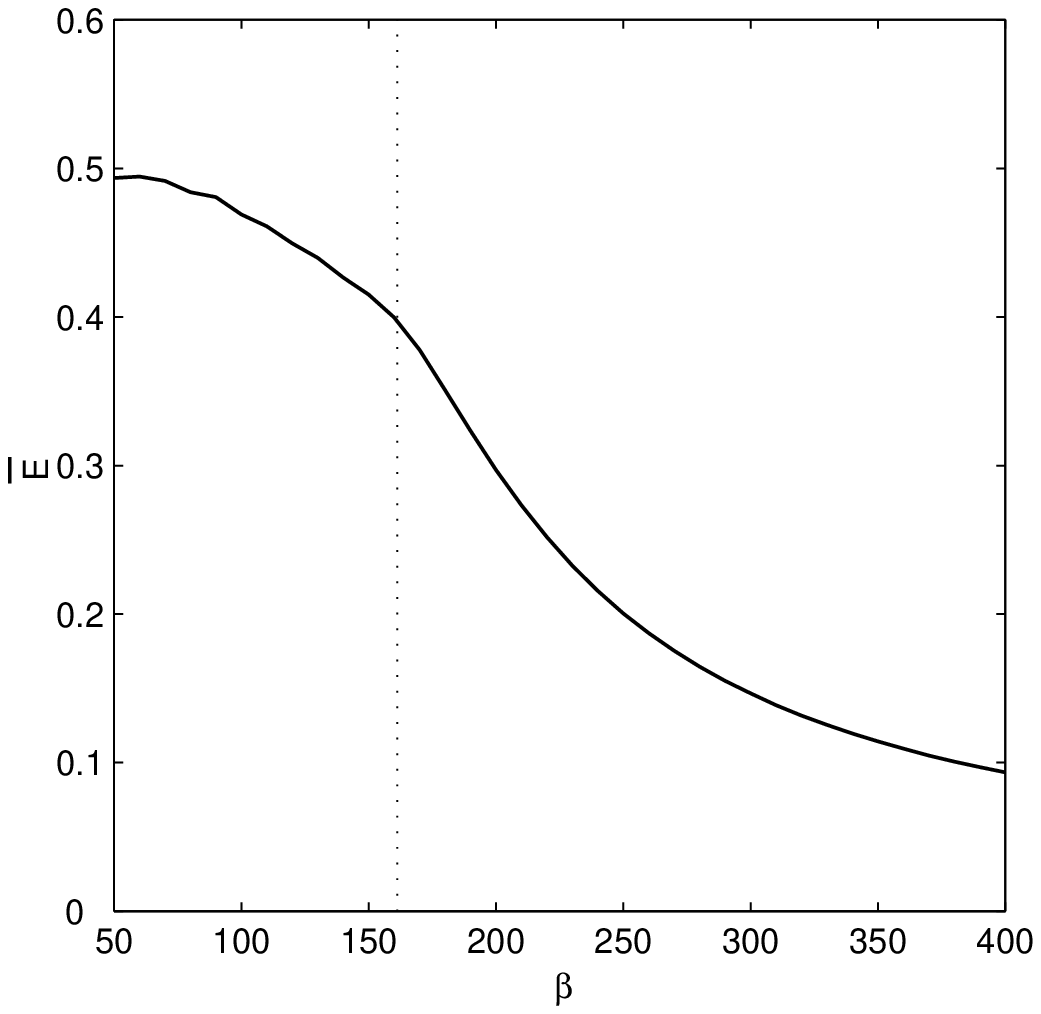}}
\subfloat[\footnotesize{$t_{r_i}$ for variable  $\beta$ ($\sigma_R = \sigma$).}]
{\label{Tri:betavariar:sigual}
\includegraphics[scale=0.65]{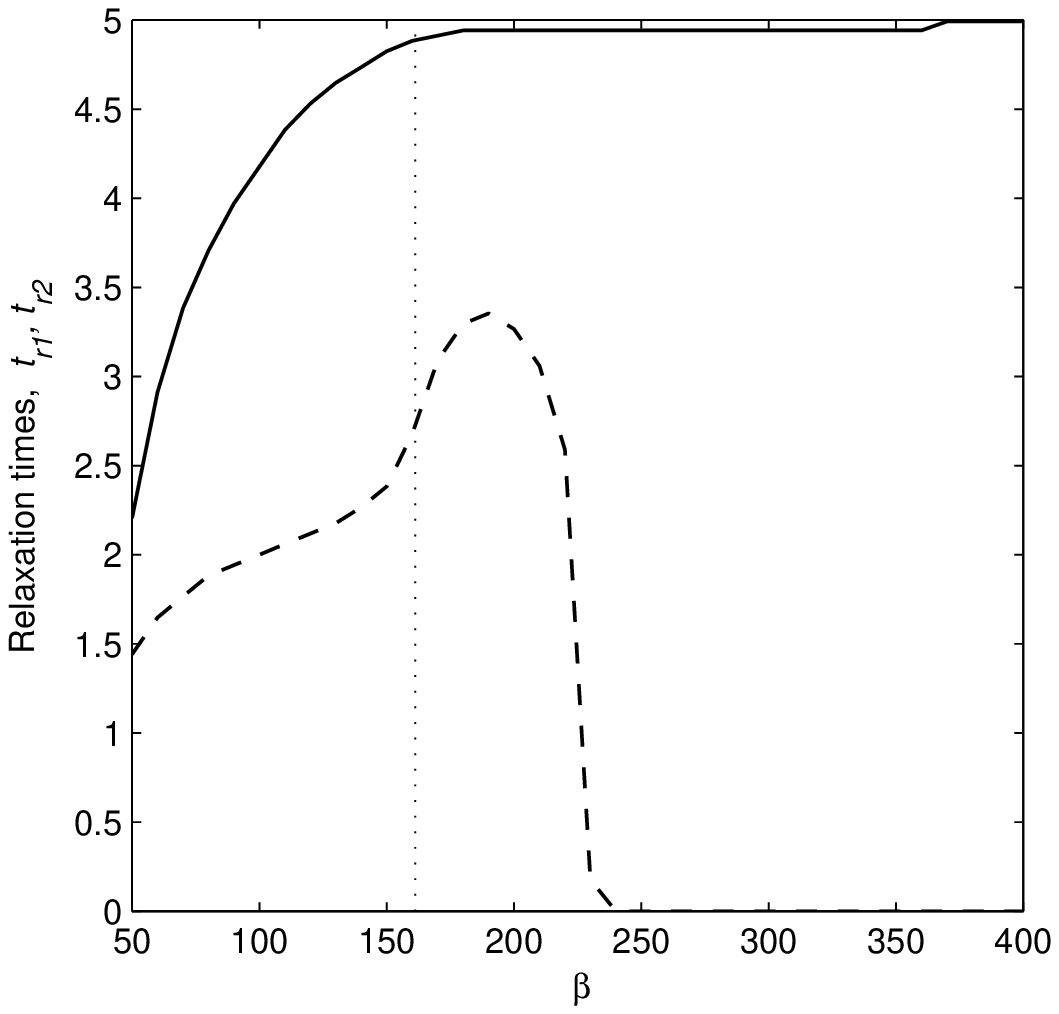}}
\caption{{\bf (a)} Effectiveness $\overline{E}$, and {\bf (b)} Relaxation-times
($t_{r_1}$ full line and $t_{r_2}$ dashed line), for variable $\beta$ and $\sigma_R = \sigma$.
Parameters according to Table~\ref{parameters}.}
\label{fig:Ebar:Cbar:Tri:betavariar:sigual}
\end{figure}
Effectiveness is a monotonically decreasing function on $\beta$. The reinfection threshold $RT$,
marked by the dotted vertical line, coincides with a change in curvature of $\overline{E}(\beta)$
from concave to convex (Figure~\ref{Ebar:betavariar:sigual}). For all endemic scenarios,
maximum intensity of treatment of early latent individuals is required for longer periods
than treatment of persistent latent individuals (Figure~\ref{Tri:betavariar:sigual}). Below the $RT$,
the relaxation-times of both post-exposure interventions increase with $\beta$. However,
above the $RT$, treatment of early latent individuals is required at its maximum intensity
for almost the entire five year period ($t_f$) and the intervention on persistent latent individuals
is needed for shorter periods. For very high transmission intensity, relaxation time for intervention
on persistent latent individuals is zero ($t_{r_2}=0$), corresponding to a singular control.

Depending on the background epidemiological scenario, we can have different optimal intervention strategies.
For example, for $\beta=100$ the optimal solution corresponds to both interventions with relaxation-times
of $t_{r_1}= 4.1765$ and $t_{r_2}=2.0$ years and for $\beta=250$ the optimal solution corresponds to treatment
of early latent individuals for approximately the entire intervention period, $t_{r_1}=4.941$ years and
treat persistent individuals at intensity always below the maximum $u_2^*(t)<1$, for $t \in [0, t_f]$
(results not shown). These interventions are associated with very different effectiveness,
45\% ($\overline{E}(100)=0.4691$) and 20\% ($\overline{E}(250)=0.2005$), respectively.


\subsection{Impact of protection against reinfection of the treated individuals
($\sigma_R\neq\sigma$) on optimal control interventions}
\label{subsec:sigma}

In this section we relax the assumption that latent ($L_1$ and $L_2$) and treated ($R$) individuals
have the same protection to reinfection. Given the lack of published studies supporting on of the hypothesis,
we explore both possibilities, as in \cite{Gomes_etall_2007}: treatment enhances protection against reinfection
($\sigma_R<\sigma$) or protection is impaired by treatment ($\sigma_R>\sigma$).
To illustrate, we will use $\sigma_R=\sigma/2$ and $\sigma_R=2\sigma$, respectively.
\begin{figure}[!ht]
\centering
\subfloat[]
{\label{Ebar:betavariar:sdif}
\includegraphics[scale=0.50]{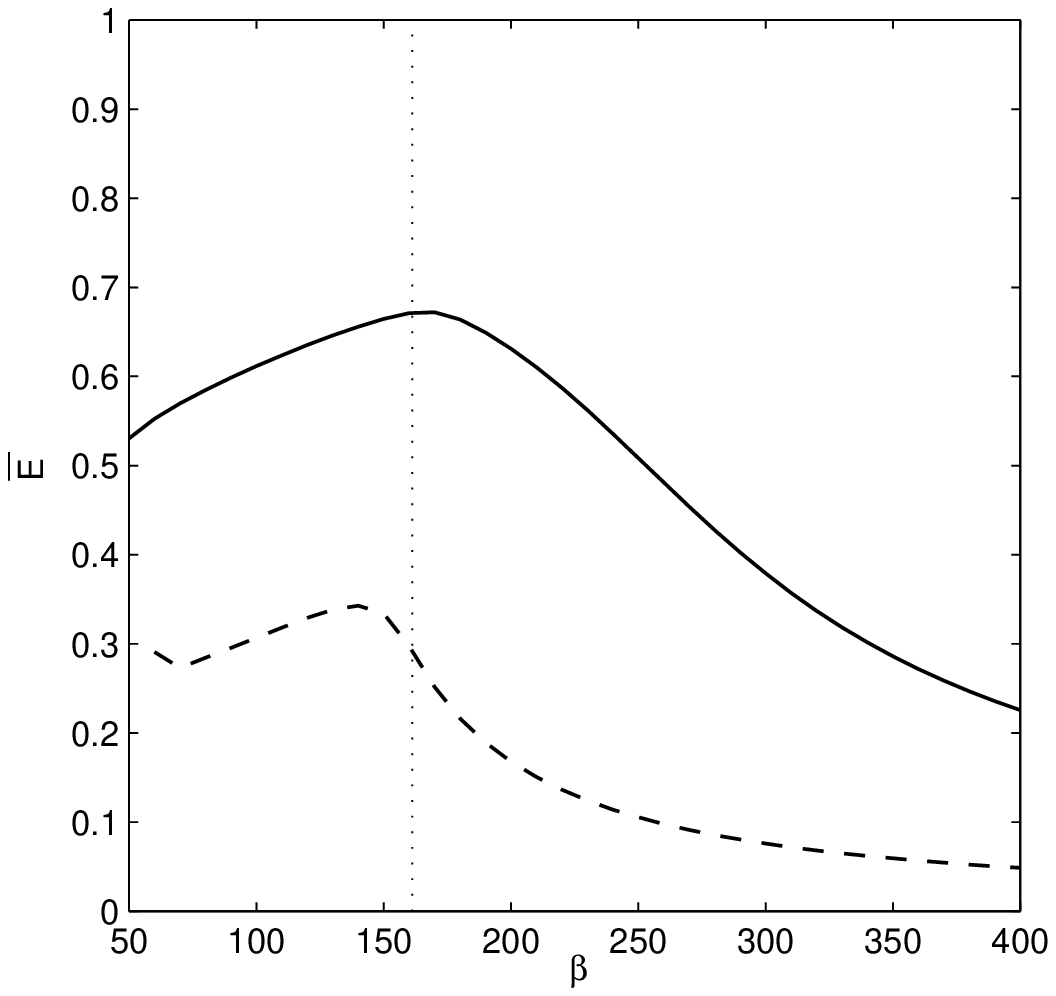}}
\subfloat[]
{\label{Tr1:betavariar:sdif}
\includegraphics[scale=0.45]{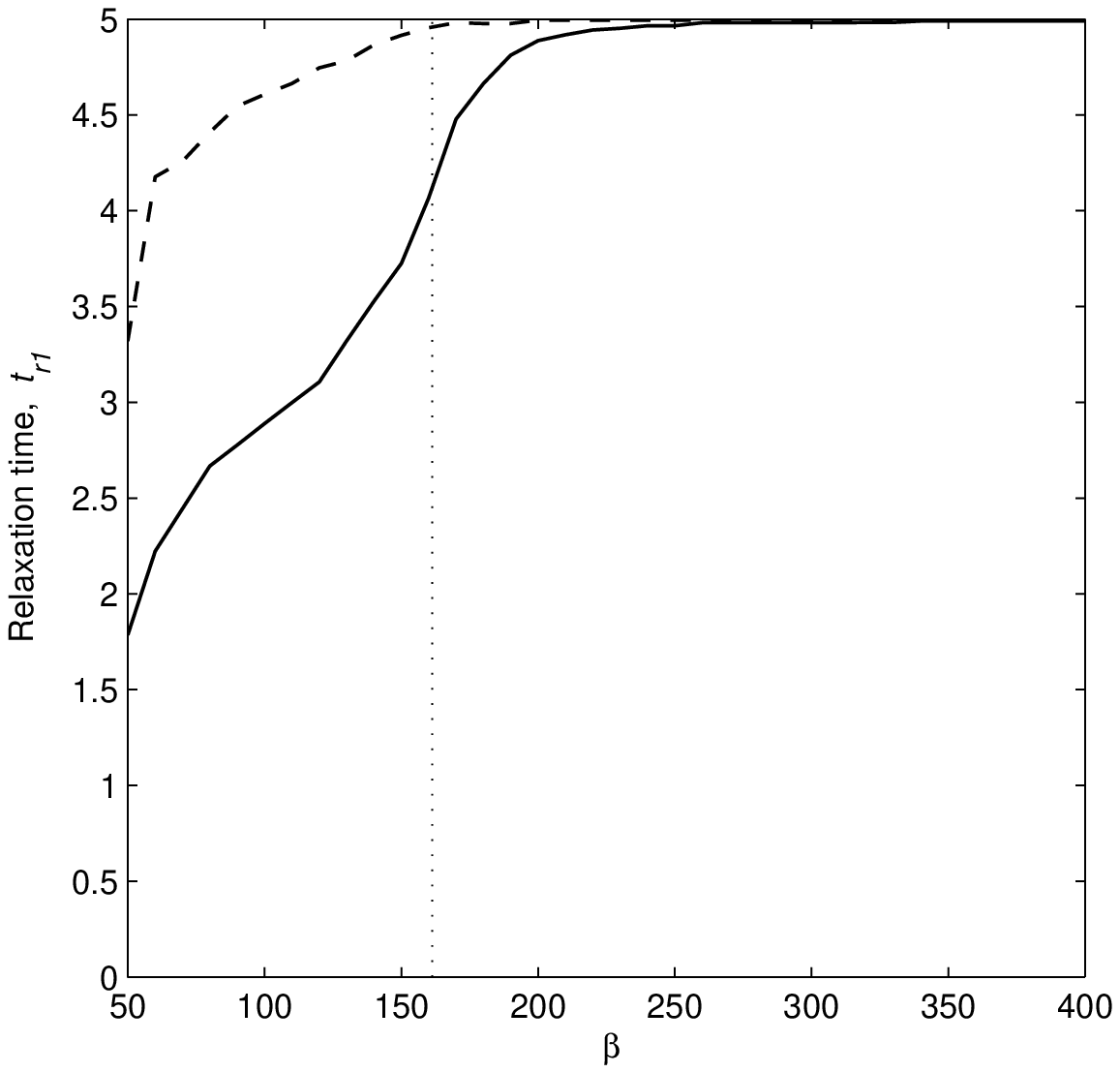}}
\subfloat[]
{\label{Tr2:betavariar:sdif}
\includegraphics[scale=0.45]{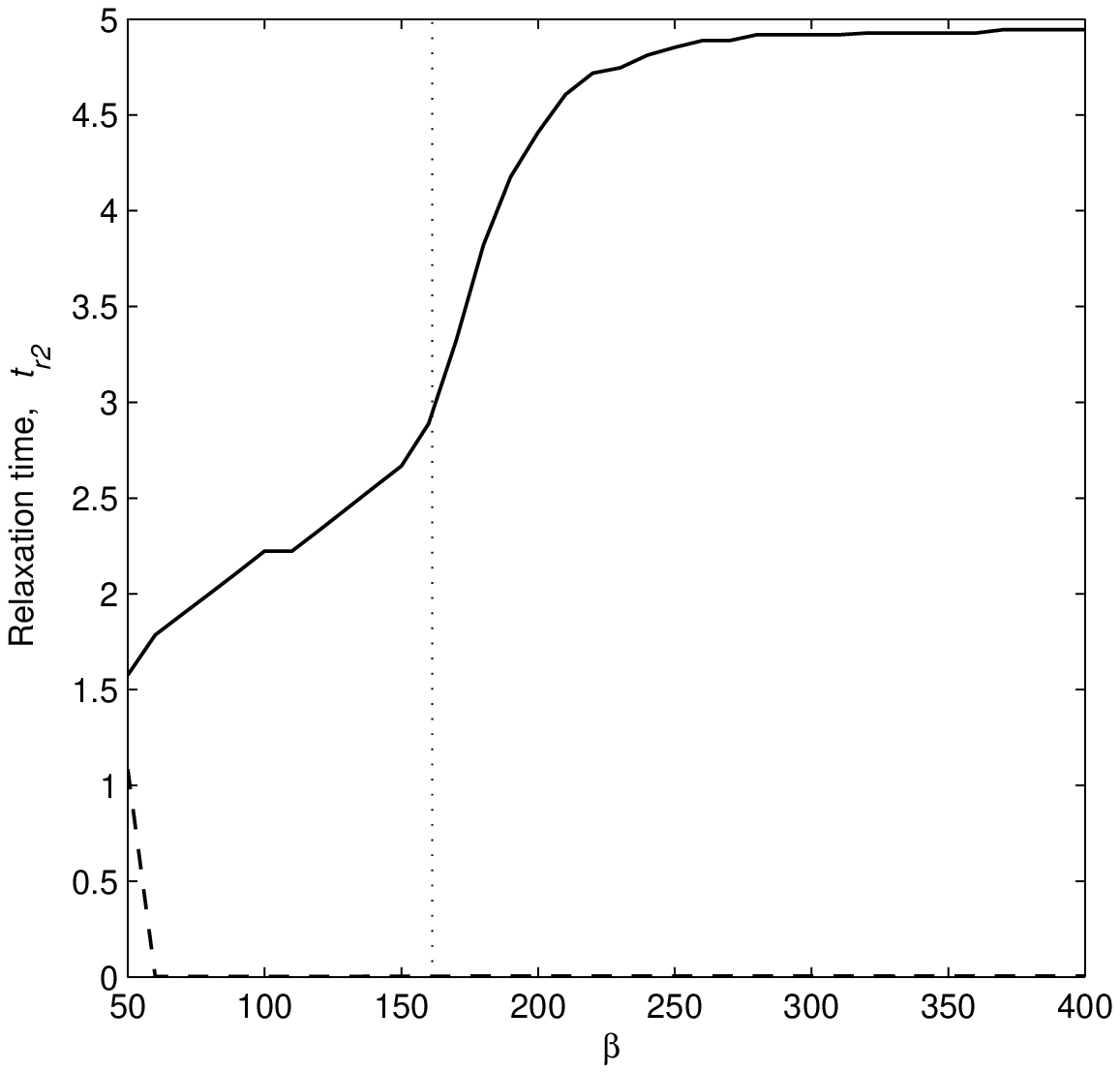}}
\caption{{\bf (a)} Effectiveness $\overline{E}$, {\bf (b)}--{\bf (c)}
Relaxation-times $t_{r_1}$ and $t_{r_2}$, for variable $\beta$.
Full and dashed lines correspond to cases $\sigma_R = \sigma/2$ and $\sigma_R=2\sigma$, respectively.
Parameters according to Table~\ref{parameters}.}
\label{fig:Ebar:Tr1:Tr2:betavariar:sdif}
\end{figure}
Results are very different for the two scenarios. If protection against reinfection
is enhanced by treatment ($\sigma_R=\sigma/2$), then the optimal solution corresponds
to treat both early and persistent latent individuals at maximum intensity
for a certain period, ranging from 1.5 to 5 years, followed by relaxation
of the intervention intensity (full lines in Figure~\ref{Tr1:betavariar:sdif}
and \ref{Tr2:betavariar:sdif}).
\begin{figure}[!ht]
\centering
\subfloat[]
{\label{fig:u1}\includegraphics[scale=0.58]{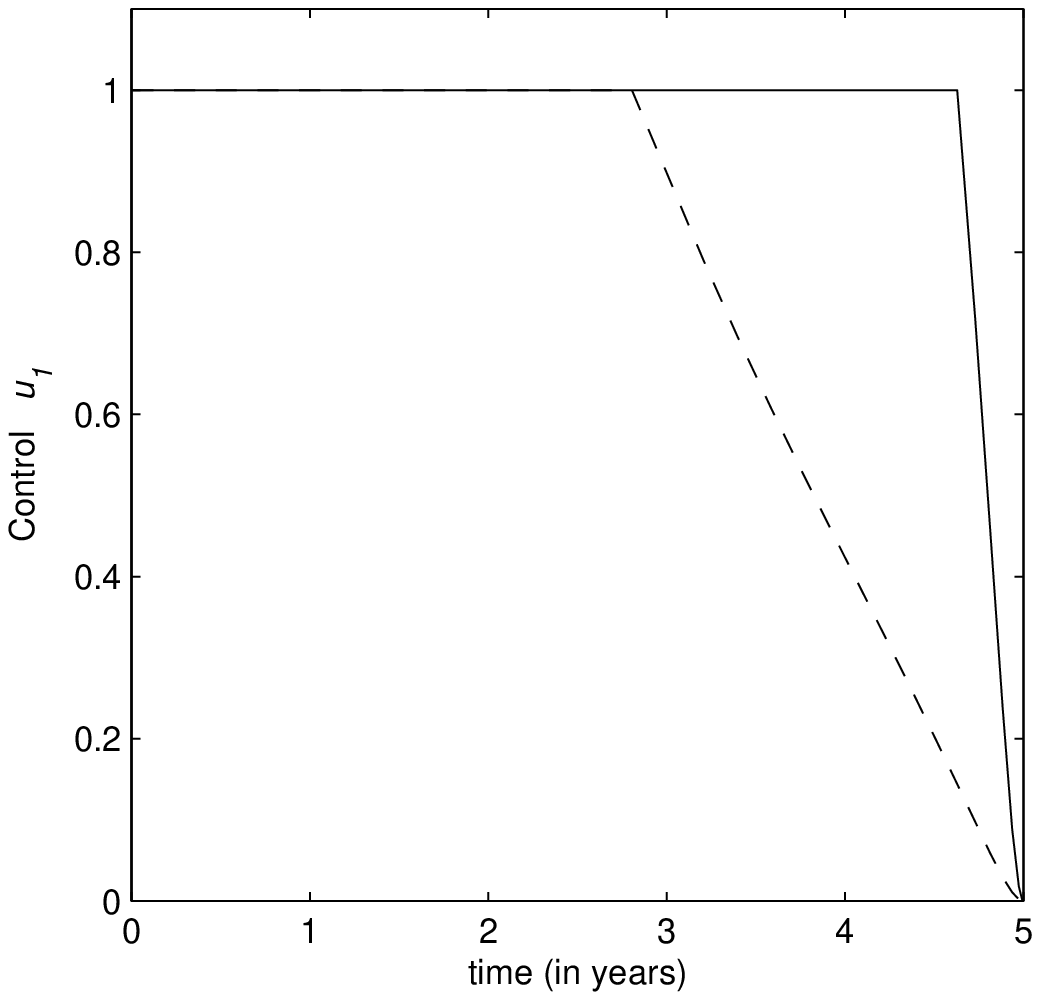}}
\subfloat[]
{\label{fig:u2}\includegraphics[scale=0.58]{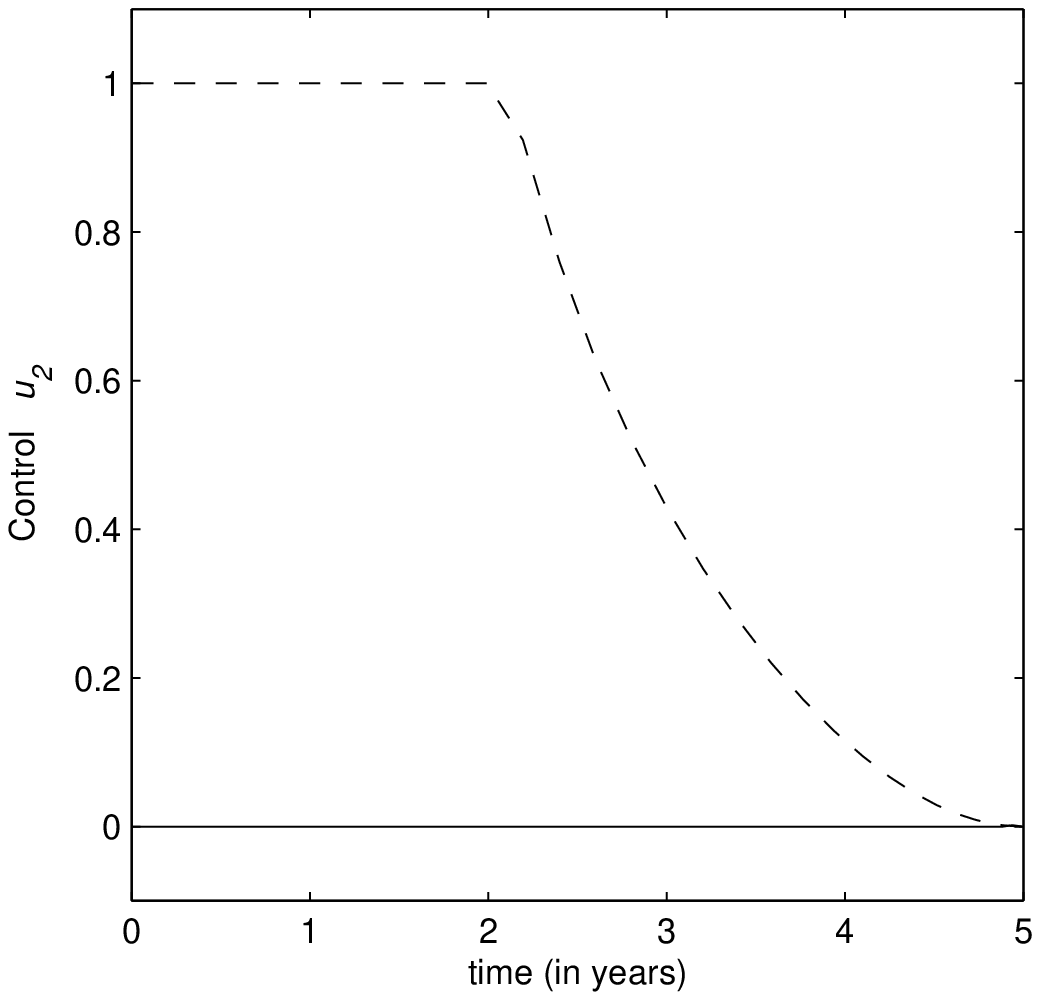}}
\caption{{\bf (a)} Control $u_1$, {\bf (b)} Control $u_2$ for $\beta=100$.
Full and dashed lines correspond to cases $\sigma_R = \sigma/2$ and $\sigma_R=2\sigma$,
respectively. Parameters according to Table~\ref{parameters}.}
\label{fig:beta100:allu:controls}
\end{figure}
The relaxation-times increase with $\beta$. However, if treatment impairs protection,
then the optimal intervention would be to treat early latent at maximum intensity
for longer periods and to treat persistent latent individuals almost always
below the maximum intensity (dashed lines in Figure~\ref{Tr1:betavariar:sdif}
and \ref{Tr2:betavariar:sdif}).
Actually, in this case the optimal solution can impose not to treat persistent latent individuals
($u_2^*=0$ for $t \in [0, t_f]$) as illustrated in Figure~\ref{fig:beta100:allu:controls}
for the case $\beta=100$. In both cases, effectiveness peaks close to the reinfection threshold $RT$.


\subsection{Optimal controls strategy and cost-effectiveness analysis}
\label{subsec:oc:strategies}

In this section we analyse the cost-effectiveness of alternative combinations
of the two possible control measures: strategy {\bf a} --  implementing both controls
$u_1$ and $u_2$, corresponding to intervene on both early and persistent latent individuals,
as in previous sections; strategy {\bf b} -- implementing only control measure $u_1$;
and strategy {\bf c} -- only control measure $u_2$, separately.

For each value of $\beta$, we compute the optimal solution for the three strategies and calculate
the associated effectiveness $\overline{E}$. In Figure~\ref{Ebar:betavariar:sigual_ui} we can see that,
below the reinfection threshold $RT$, the strategy using interventions on both population groups has higher effectiveness.
However, above the $RT$ this advantage is marginal, comparing with the intervention on early latent individuals, only.
\begin{figure}[!ht]
\centering
\subfloat[]
{\label{Ebar:betavariar:sigual_ui}
\includegraphics[scale=0.58]{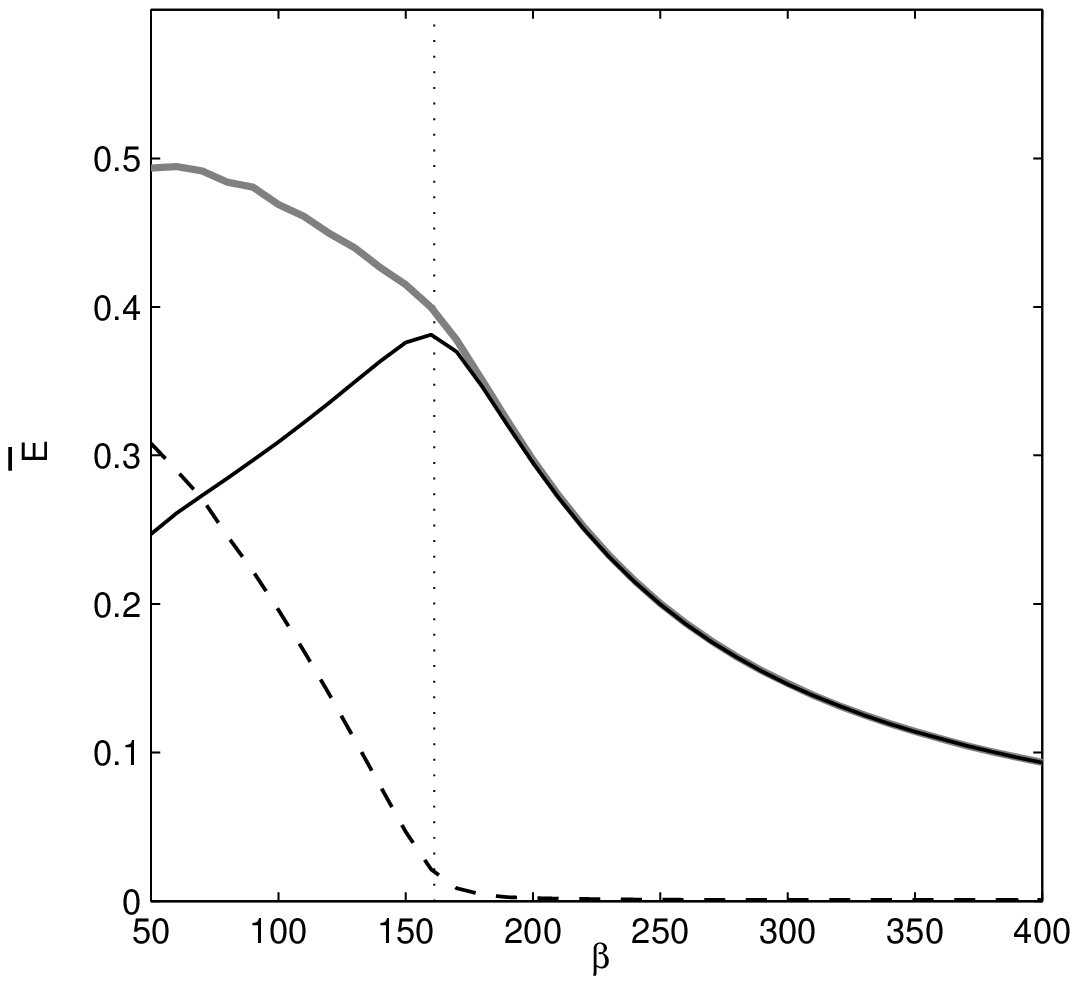}}
\subfloat[]{\label{Tr:betavariar:sigual_ui}
\includegraphics[scale=0.58]{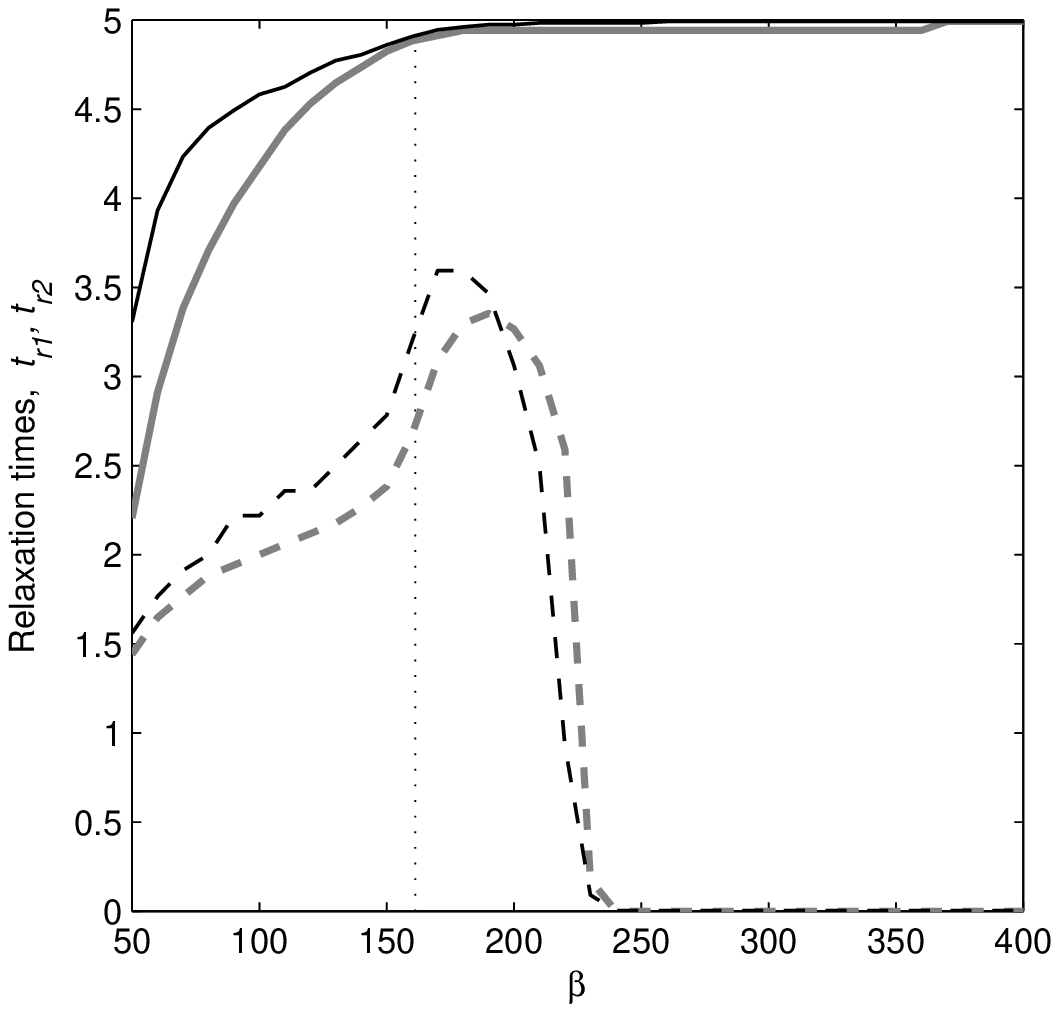}}
\caption{{\bf (a)} Effectiveness $\overline{E}$ and {\bf (b)} Relaxation-times,
$t_{r_1}$ or $t_{r_2}$, for variable $\beta$. Grey lines correspond to intervention
with both controls $u_1$ and $u_2$. Black full and dashed lines correspond to interventions
with only $u_1$ or $u_2$, respectively. Parameters according to Table~\ref{parameters} and $\sigma_R = \sigma$.}
\label{EbarTr:betavariar:sigual_ui}
\end{figure}
From Figure~\ref{Tr:betavariar:sigual_ui}, we can have one idea of the time design
of the optimal intervention for each case. Intervention on early latent individuals only,
corresponds to a control at maximum intensity for long periods (very high $t_{r_1}$).
When intervening on persistent latent individuals only, maximum intensity of control
is required for shorter periods which are close to zero for very high transmission intensity,
corresponding to singular controls.

For a particular epidemiological scenario (fixed $\beta$ and $\sigma_R$), we can use
a more classical approach to analyse the cost-effectiveness of the three alternative
strategies by using the incremental cost effectiveness ratio (ICER) in \cite{Okosun_etall_2013}.
This ratio is used to compare the differences between the costs and health outcomes
of two alternative intervention strategies that compete for the same resources
and it is generally described as the additional cost per additional health outcome.
First, we must rank the strategies in order of increasing effectiveness, here measured
as the total infections averted $A(\beta,\sigma_R)$, defined in \eqref{A}. Given two
competing strategies $a$ and $b$, the ICER of the strategy with the least effectiveness
is its ACER and for the following strategies is given by
\begin{equation*}
ICER(b)=\frac{A(b)- A(a)}{TC(b)-TC(a)}.
\end{equation*}
For illustration, we focus on an epidemiological scenario of moderate transmission
with $\beta=100$. Results are shown in Table~\ref{table:ICER:100}. Strategy {\bf c}
has a unit cost of 1\ 485, it is more costly and less effective than strategy {\bf b},
so we exclude strategy {\bf c} from the set of alternatives.
\begin{table}[!htb]
\centering
\begin{tabular}{|c|c|c|c|c|}
\hline
{\normalsize{Strategy}} & {\normalsize{$A$}} & {\normalsize{$TC$}}& {\normalsize{$ACER$}} & {\normalsize{$ICER$}} \\
\hline
c &  {\normalsize{$24$}} & {\normalsize{$35\ 640$}}& {\normalsize{$1\  485$}}& {\normalsize{$1\ 485$}} \\
\hline
b &  {\normalsize{$37$}} & {\normalsize{$211$}}& {\normalsize{$5.7$}}& {\normalsize{$-1\ 721$}} \\
\hline
a &  {\normalsize{$56$}} & {\normalsize{$23\ 374$}}& {\normalsize{$417.4$}}& {\normalsize{$1\ 207$}} \\
\hline
\end{tabular}
\caption{Incremental cost-effectiveness ratio for alternative strategies a, b and c, with $\beta=100$.
Parameters according to Table~\ref{parameters}, $C_1=C_2=1$ and $\sigma_R = \sigma$.}
\label{table:ICER:100}
\end{table}
We align the remaining alternative strategies by increasing effectiveness and recompute the ICER:
ICER(b)=ACER(b)=5.7 and ICER(a)=$1\ 207$. Hence, we conclude that strategy {\bf b} has the least
ICER and therefore is more cost-effective than strategy {\bf a}. For this illustration
we have considered the same cost for both interventions ($C_1=C_2=1$). Results should depend
strongly on the choice of these parameters, however this discussion is out of the scope of our present work.


\section{Discussion}
\label{sec:discussion}

In this work we study the potential of widespread of two post-exposure interventions that are not widely used:
treatment of early latent individuals and prophylactic treatment/vaccination of persistent latent individuals.
We propose an optimal control problem that consists in analysing how these two control measures should be implemented,
for a certain time period, in order to reduce the number of active infected individuals, while controlling
the interventions implementation costs. This approach differs from others
\cite{Blower_Small_Hopewell_1996,Castillo_Feng_1998,Dye_et_all_1998,Gomes_etall_2007}
since it allows intensity of intervention to be changed along time.

As previous suggested \cite{Gomes_et_all_2004,Gomes_etall_2007}, interventions impact can be sensitive
to transmission intensity and reinfection. We choose a dimensionless measure of effectiveness to compare
different scenarios: assuming different transmission intensity ($\beta$) or assuming different assumptions
on protection against reinfection conferred by treatment ($\sigma_R$).

Effectiveness of optimal intervention decreases with transmission.
There is a change in the intervention profile from low to high transmission.
In high transmission settings, the intensity of treatment of persistent
latent individuals $u_2^*$ for the optimal solution is reduced.
Since treatment of persistent latent individuals reduces the reactivation rate
(from $\omega$ to $\omega_R$), when reinfection is very common and it overcomes
reactivation impact, the advantage of treating this population group is less pronounced.

The susceptibility to reinfection after treatment is still an open question.
In one hand, treatment can reduce the risk of TB by reducing the amount of bacteria present
in the lungs. On the other hand, we can argue that latent infection boosts immunity by constant
stimulation of the immune system, so treatment could reduce protection. We vary parameter
$\sigma_R$ to explore these two possible scenarios: $\sigma_R=\sigma/2$ when treatment
enhances protection and $\sigma_R=2\sigma$ when treatment impairs protection. Results
show that treatment of persistent latent individuals should be less intense or even absent
for the case where treatment impairs protection. Similar results were obtained for the case
of constant treatment rates in \cite{Gomes_etall_2007}. In fact, for the correspondent
case with maximum intensity ($u_1\equiv 1$ and $u_2\equiv 1$), we can have an increase
of the equilibrium proportion of infectious individuals ($t\rightarrow \infty$).

We can conclude that reinfection has an important role in the determination
of the optimal control strategy, by diminishing the intervention intensity
on persistent latent individuals: first when transmission is very high corresponding
to a very high reinfection rate and secondly when this population group has
a lower susceptibility to reinfection ($\sigma<\sigma_R$). Interestingly,
the reinfection threshold $RT$ of the model with no controls still marks
a change in the model behaviour. Even though, we are comparing equilibrium
results to transient short time interventions.

Cost-effectiveness analysis of alternative combinations of the two interventions is conducted.
For $\beta=100$, treatment of only early latent individuals is the more cost-effective strategy,
despite of treatment of both early latent and persistent latent individuals having a higher effectiveness.
The total cost associated with treatment of persistent latent individuals is very high,
especially because this population group can be very big in comparison to the others.
It is believed that about one third of world's population is latent infected with TB. Here,
for simplicity, we have considered the cost parameters both equal to one. However,
this depends greatly on the type of intervention used and results can be changed. For example,
if intervention on persistent latent individuals could be done by vaccination, then
the per person unit cost could be significantly reduced. Plus, treatment of early latent individuals
implies contact tracing of index cases and prophylactic treatment, which can also be very expensive.


\appendix


\section{Proof of Theorem~\ref{the:thm}}
\label{app:theo:A}

The Hamiltonian $H$ associated to the problem in \eqref{modelGab_controls} is given by
\begin{equation*}
\begin{split}
H&= H(S(t), L_1(t), I(t), L_2(t), R(t), \lambda(t), u_1(t), u_2(t)) \\
&=W_0 I(t) + \frac{W_1}{2}u_1^2(t) + \frac{W_2}{2}u_2^2(t) \\
&\quad + \lambda_1(t) \left(\mu N - \frac{\beta}{N} I(t) S(t) - \mu S(t) \right)\\
&\quad + \lambda_2(t) \left( \frac{\beta}{N} I(t)\left( S(t)
+ \sigma L_2(t) + \sigma_R R(t)\right) - (\delta + \tau_1 u_1(t) + \mu)L_1(t) \right)\\
&\quad + \lambda_3(t) \left(\phi \delta L_1(t) + \omega L_2(t)
+ \omega_R R(t) - (\tau_0 + \mu) I(t) \right)\\
&\quad + \lambda_4(t) \left((1 - \phi) \delta L_1(t)
- \sigma \frac{\beta}{N} I(t) L_2(t)
- (\omega + \tau_2 u_2(t) + \mu)L_2(t) \right)\\
&\quad  + \lambda_5(t) \left(\tau_0 I(t) + \tau_1 u_1(t) L_1(t) + \tau_2 u_2(t) L_2(t)
- \sigma_R \frac{\beta}{N} I(t) R(t) - (\omega_R + \mu)R(t) \right),
\end{split}
\end{equation*}
where $\lambda(t) = \left(\lambda_1(t), \lambda_2(t),
\lambda_3(t), \lambda_4(t), \lambda_5(t)\right)$ is
the \emph{adjoint vector}. According to the Pontryagin maximum principle
\cite{Pontryagin_et_all_1962}, if $(u_1^*(\cdot), u_2^*(\cdot)) \in \Omega$
is optimal for problem \eqref{modelGab_controls}--\eqref{mincostfunct}
with the initial conditions given in Table~\ref{icbeta100}
and fixed final time $t_f$, then there exists a nontrivial absolutely continuous mapping
$\lambda : [0, t_f] \to \mathbb{R}^5$, $\lambda(t) = \left(\lambda_1(t), \lambda_2(t),
\lambda_3(t), \lambda_4(t), \lambda_5(t)\right)$, such that
\begin{equation*}
\dot{S} = \frac{\partial H}{\partial \lambda_1} \, , \quad
\dot{L}_1= \frac{\partial H}{\partial \lambda_2} \, , \quad
\dot{I}= \frac{\partial H}{\partial \lambda_3} \, , \quad
\dot{L}_2 = \frac{\partial H}{\partial \lambda_4} \, , \quad
\dot{R} = \frac{\partial H}{\partial \lambda_5}
\end{equation*}
and
\begin{equation}
\label{adjsystemPMP}
\dot{\lambda}_1 = -\frac{\partial H}{\partial S} \, , \quad
\dot{\lambda}_2 = -\frac{\partial H}{\partial L_1} \, , \quad
\dot{\lambda}_3 = -\frac{\partial H}{\partial I} \, , \quad
\dot{\lambda}_4 = -\frac{\partial H}{\partial L_2} \, , \quad
\dot{\lambda}_5 = -\frac{\partial H}{\partial R} \, .
\end{equation}
The minimality condition
\begin{equation}
\label{maxcondPMP}
\begin{split}
H(S^*(t), &L_1^*(t), I^*(t), L_2^*(t), R^*(t),
\lambda^*(t), u_1^*(t), u_2^*(t))\\
&= \min_{0 \leq u_1, u_2 \leq 1}
H(S^*(t), L_1^*(t), I^*(t), L_2^*(t), R^*(t), \lambda^*(t), u_1, u_2)
\end{split}
\end{equation}
holds almost everywhere on $[0, t_f]$. Moreover, the transversality conditions
\begin{equation*}
\lambda_i(t_f) = 0, \quad
i =1,\ldots, 5 \, ,
\end{equation*}
hold.

\begin{lemma}
\label{lem:thm}
For problem \eqref{modelGab_controls}--\eqref{mincostfunct} with fixed initial conditions
$S(0)$, $L_1(0)$, $I(0)$, $L_2(0)$ and $R(0)$ and fixed final time $t_f$,
there exists adjoint functions $\lambda_1^*(\cdot)$, $\lambda_2^*(\cdot)$,
$\lambda_3^*(\cdot)$, $\lambda_4^*(\cdot)$ and $\lambda_5^*(\cdot)$ such that
\begin{equation}
\label{adjoint_function}
\begin{cases}
\dot{\lambda^*_1}(t) = \lambda^*_1(t) \left(\frac{\beta}{N} I^*(t)
+ \mu \right) - \lambda^*_2(t) \frac{\beta}{N} I^*(t) \\[0.1 cm]
\dot{\lambda^*_2}(t) = \lambda^*_2(t)\left(\delta + \tau_1 + \mu\right)
- \lambda^*_3(t) \phi \delta - \lambda^*_4(t) (1 - \phi) \delta
- \lambda^*_5(t)\tau_1 u^*_1(t) \\[0.1 cm]
\dot{\lambda^*_3}(t) = -W_0 + \lambda^*_1(t) \frac{\beta}{N} S^*(t)
- \lambda^*_2(t) \frac{\beta}{N}(S^*(t) + \sigma L_2^*(t) + \sigma_R R^*(t))\\
\qquad \quad + \lambda^*_3(t) \left(\tau_0 + \mu\right)
+\lambda^*_4(t)\sigma \frac{\beta}{N} L_2^*(t)
- \lambda^*_5(t)\left(\tau_0 - \sigma_R \frac{\beta}{N} R^*(t) \right) \\[0.1 cm]
\dot{\lambda^*_4}(t) = - \lambda^*_2(t) \frac{\beta}{N}I^*(t)
\sigma - \lambda^*_3(t) \omega + \lambda^*_4(t)\left(\sigma \frac{\beta}{N} I^*(t)
+ \omega + \tau_2 u^*_2(t) + \mu\right)\\
\qquad \quad - \lambda^*_5(t)\left( \tau_2 u^*_2(t) \right) \\[0.1 cm]
\dot{\lambda^*_5}(t) = -\lambda^*_2(t) \sigma_R \frac{\beta}{N}I^*(t)
- \lambda^*_3(t) \omega_R + \lambda^*_5(t)\left(\sigma_R \frac{\beta}{N} I^*(t)
+\omega_R + \mu\right) \, ,
\end{cases}
\end{equation}
with transversality conditions
\begin{equation*}
\lambda^*_i(t_f) = 0,
\quad i=1, \ldots, 5 \, .
\end{equation*}
Furthermore,
\begin{equation}
\label{optcontrols}
\begin{split}
u_1^*(t) &= \min \left\{ \max \left\{0, \frac{\tau_1 L_1^*
\left(\lambda^*_2 - \lambda^*_5\right)}{W_1}\right\}, 1 \right\} \, ,\\
u_2^*(t) &= \min \left\{ \max \left\{0, \frac{\tau_2 L^*_2
\left(\lambda^*_4 - \lambda^*_5\right)}{W_2}\right\}, 1 \right\}  \, .
\end{split}
\end{equation}
\end{lemma}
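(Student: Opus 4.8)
The plan is to apply the Pontryagin maximum principle, already recalled in the discussion preceding the lemma statement, and simply compute the three ingredients it produces: the adjoint system \eqref{adjoint_function}, the transversality conditions, and the characterisation \eqref{optcontrols} of the optimal controls. Since Theorem~\ref{the:thm} guarantees existence of an optimal pair $(u_1^*,u_2^*)$ with associated optimal trajectory, the maximum principle asserts the existence of a nontrivial adjoint vector $\lambda^*=(\lambda_1^*,\dots,\lambda_5^*)$ satisfying $\dot\lambda_i^* = -\partial H/\partial x_i$ evaluated along the optimal solution, together with $\lambda_i^*(t_f)=0$ (free terminal state, fixed terminal time), and the minimality condition \eqref{maxcondPMP}. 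So the proof is essentially a verification: write out the Hamiltonian $H$ (as displayed just above the lemma), differentiate, and tidy up.

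First I would carry out the five partial derivatives $\partial H/\partial S$, $\partial H/\partial L_1$, $\partial H/\partial I$, $\partial H/\partial L_2$, $\partial H/\partial R$, negate them, and collect terms to obtain the five right-hand sides in \eqref{adjoint_function}. The only place where the running cost contributes is $\partial H/\partial I$, which picks up the $-W_0$ term in $\dot\lambda_3^*$; every other adjoint equation is homogeneous in $\lambda^*$. I would double-check the coupling coefficients: for instance, $L_1$ appears in $\dot L_1$, $\dot I$, $\dot L_2$, $\dot R$, producing the terms $\lambda_2^*(\delta+\tau_1 u_1^*+\mu)$, $-\lambda_3^*\phi\delta$, $-\lambda_4^*(1-\phi)\delta$, $-\lambda_5^*\tau_1 u_1^*$ in $\dot\lambda_2^*$; note the displayed system writes $\delta+\tau_1+\mu$, so one should read $\tau_1$ there as shorthand for $\tau_1 u_1^*(t)$ to be consistent with the Hamiltonian (a minor notational point worth a remark). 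The transversality conditions are immediate from the general statement of the principle because the terminal state is free.

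Next I would derive \eqref{optcontrols} from the minimality condition \eqref{maxcondPMP}. Since $H$ is quadratic and strictly convex in $(u_1,u_2)$ — the $u_i^2$ coefficients $W_i/2$ are positive — the unconstrained minimiser is found by setting $\partial H/\partial u_1 = W_1 u_1 - \lambda_2^*\tau_1 L_1^* + \lambda_5^*\tau_1 L_1^* = 0$ and $\partial H/\partial u_2 = W_2 u_2 - \lambda_4^*\tau_2 L_2^* + \lambda_5^*\tau_2 L_2^* = 0$, giving $u_1 = \tau_1 L_1^*(\lambda_2^*-\lambda_5^*)/W_1$ and $u_2 = \tau_2 L_2^*(\lambda_4^*-\lambda_5^*)/W_2$. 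Projecting onto the admissible interval $[0,1]$ by the usual $\min\{\max\{0,\cdot\},1\}$ truncation (valid because the objective is convex in the controls, so the constrained minimiser is the projection of the stationary point) yields exactly \eqref{optcontrols}.

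There is no serious obstacle here: the statement is a routine application of a cited theorem, and the main work is bookkeeping in the partial differentiation of $H$ and keeping signs straight. If anything the subtlest point is justifying that the pointwise minimisation \eqref{maxcondPMP} reduces to the projected stationarity formula — i.e. that on $[0,1]^2$ the minimum of a strictly convex quadratic is attained at the coordinate-wise clamp of the global minimiser — but this is standard and follows because the objective separates additively in $u_1$ and $u_2$. I would close by noting that uniqueness of $(u_1^*,u_2^*)$ (hence of the whole optimal solution, already asserted in Theorem~\ref{the:thm}) is consistent with \eqref{optcontrols}, since the right-hand sides are continuous functions of the state and adjoint variables, so the optimality system is a well-posed two-point boundary value problem for small $t_f$.
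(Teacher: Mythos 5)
Your proposal is correct and follows essentially the same route as the paper: the published proof simply states that the adjoint system \eqref{adjoint_function} follows from the Pontryagin maximum principle via \eqref{adjsystemPMP} and that the control characterisation \eqref{optcontrols} follows from the minimality condition \eqref{maxcondPMP}, with uniqueness for small $t_f$ from boundedness and the Lipschitz property; you merely spell out the bookkeeping that the paper leaves implicit. Your observation that the coefficient $\delta+\tau_1+\mu$ in the displayed equation for $\dot{\lambda}^*_2$ should read $\delta+\tau_1 u_1^*(t)+\mu$ to be consistent with the Hamiltonian is a valid and worthwhile remark.
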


\begin{proof}
System \eqref{adjoint_function} is derived from the Pontryagin maximum principle
(see \eqref{adjsystemPMP}, \cite{Pontryagin_et_all_1962}) and the optimal controls
\eqref{optcontrols} come from the minimality condition \eqref{maxcondPMP}.
For small final time $t_f$, the optimal control pair given by \eqref{optcontrols}
is unique due to the boundedness of the state and adjoint functions and the Lipschitz property
of systems \eqref{modelGab_controls} and \eqref{adjoint_function}
(see \cite{SLenhart_2002} and references cited therein).
\end{proof}

\begin{proof}[Proof of Theorem~\ref{the:thm}]
Existence of an optimal solution $\left(S^*, L_1^*, I^*, L_2^*, R^*\right)$
associated to an optimal control pair $\left(u_1^*, u_2^*\right)$ comes from
the convexity of the integrand of the cost function $\mathcal{J}$ with respect
to the controls $(u_1, u_2)$ and the Lipschitz property of the state system
with respect to state variables $\left(S, L_1, I, L_2, R\right)$
(see, \textrm{e.g.}, \cite{Cesari_1983,Fleming_Rishel_1975}).
For small final time $t_f$, the optimal control pair is given by \eqref{optcontrols}
that is unique by the Lemma above. Because the state system \eqref{modelGab_controls}
is autonomous, uniqueness is valid for any time $t_f$ and not only for small time $t_f$.
\end{proof}


\section{Sensitivity analysis to the duration of intervention $t_f$}
\label{app:tf:B}

We fix $\beta=100$ and $\sigma_R=\sigma$ and the remaining parameters according
to Table~\ref{parameters} and vary $t_f$. Results for the proportion of infectious
individuals are shown in the Figure~\ref{fig:I:beta100:sigual:tf}.
\begin{figure}[!ht]
\centering
\includegraphics[scale=0.70]{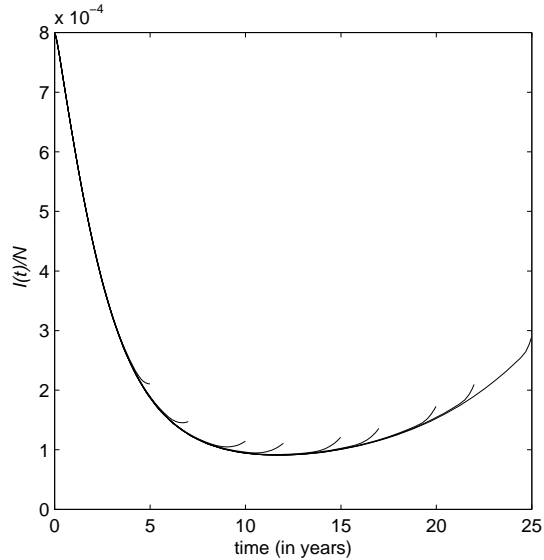}
\caption{Proportion of infectious individuals for the optimal solution $I(t)$
with $t_f \in \{5, 7, 10, 12, 15, 17, 20, 22, 25 \}$. Parameters according
to Table~\ref{parameters}, $\beta=100$ and $\sigma_R=\sigma$.}
\label{fig:I:beta100:sigual:tf}
\end{figure}
The general behaviour do not change significantly with $t_f$. The proportion
of infected individuals slightly increases towards the end of the intervention
for $t_f>7$. This tendency is more pronounced for higher $t_f$.


\section{Sensitivity analysis to the weight constants on the objective functional $\mathcal{J}$}
\label{app:Wis:C}

Figure~\ref{fig:secanal:Wi} shows the results for different combination
of the weight constants on the objective functional $\mathcal{J}$. We fix $\beta=100$
and $\sigma_R=\sigma$ and the remaining parameters according to Table~\ref{parameters}
and vary $W_0$, $W_1$ and $W_2$. Efficacy decreases when the costs $W_1$ and $W_2$ increase,
corresponding to an earlier relaxation of the intensity of treatment $(u_1(t), u_2(t))$
in the optimal solution due to cost restrictions.
\begin{figure}[!ht]
\centering
\subfloat[\footnotesize{$W_0=50$ and varying $W_1=W_2$.}]{\label{fig:secanal:W12}
\includegraphics[scale=0.56]{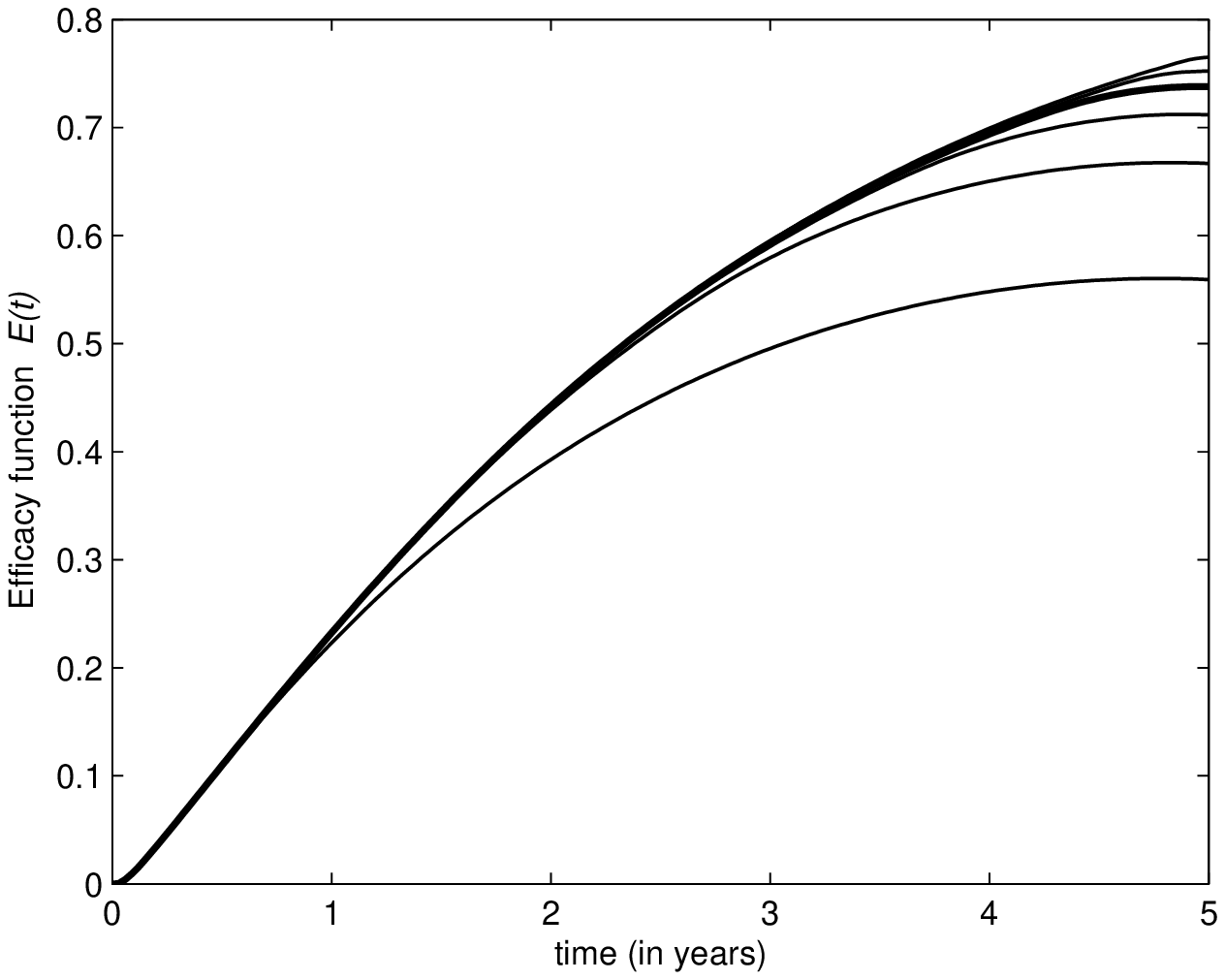}}
\subfloat[\footnotesize{$W_1=W_2=50$ and varying $W_0$.}]{\label{fig:secanal:W0}
\includegraphics[scale=0.60]{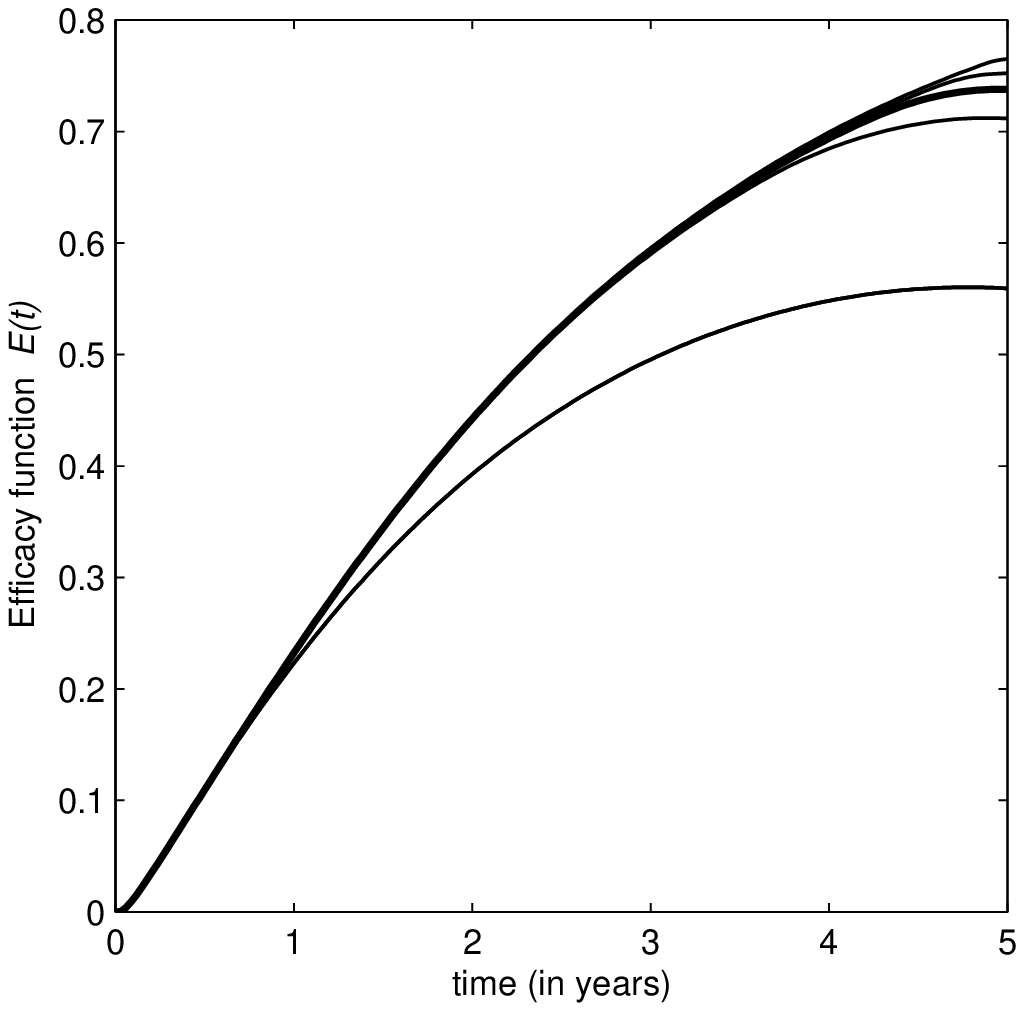}}\\
\subfloat[\footnotesize{$W_0=W_2=50$ and varying $W_1$.}]
{\label{fig:secanal:W1}
\includegraphics[scale=0.58]{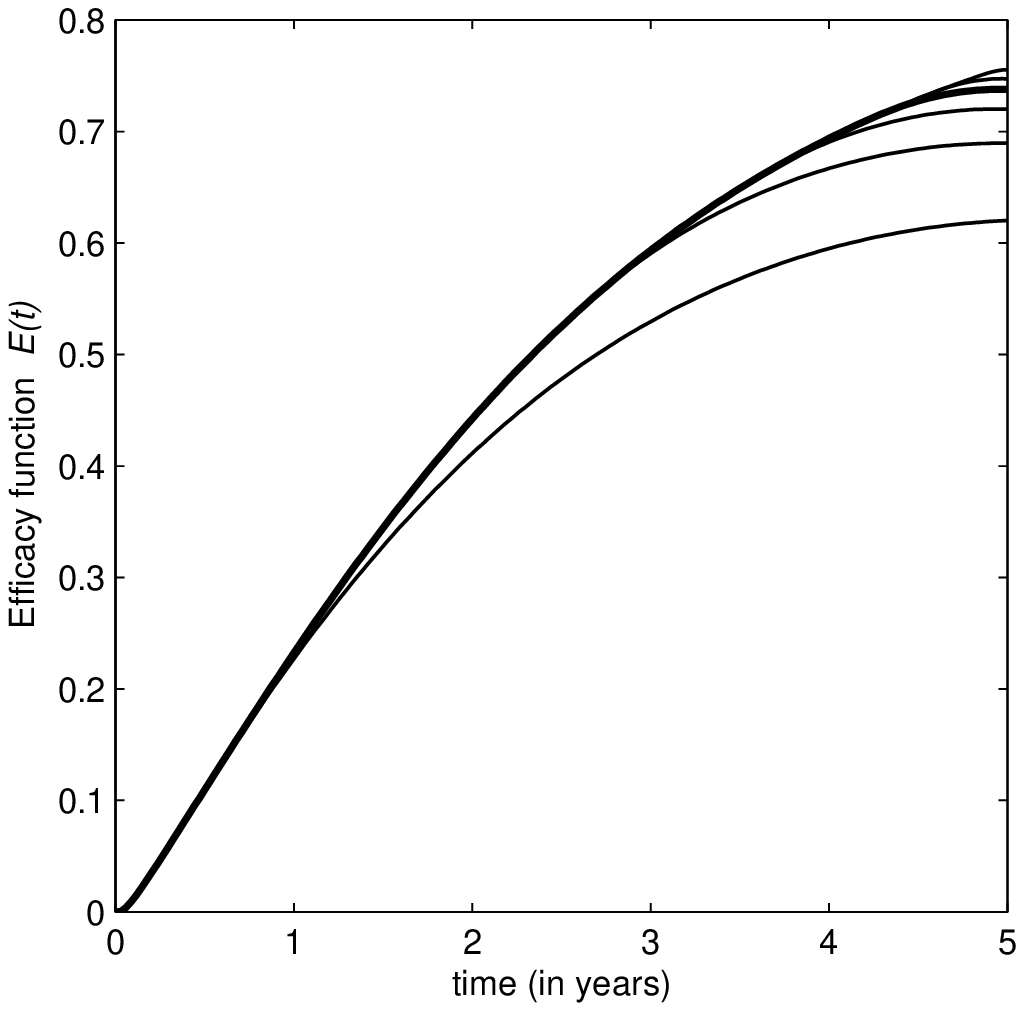}}
\subfloat[\footnotesize{$W_0=W_1=50$ and varying $W_2$.}]
{\label{fig:secanal:W2}
\includegraphics[scale=0.58]{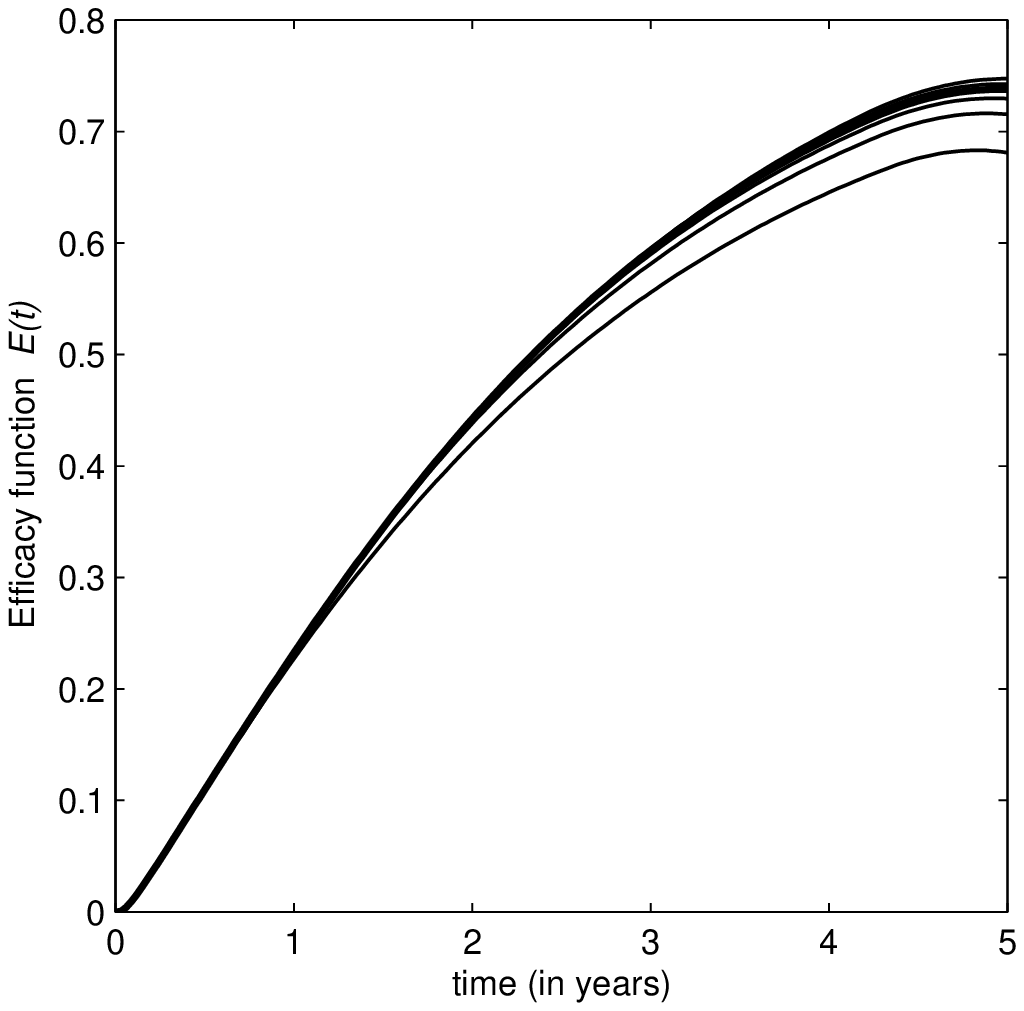}}
\caption{Sensitivity analysis to the weight constants on the objective functional $\mathcal{J}$.
{\bf (a)} $W_0=50$ and $W_1=W_2=$5, 25, 50, 100, 200, 500. {\bf (b)} $W_1=W_2=50$ and
$W_0=$5, 25, 50, 100, 200, 500. {\bf (c)} $W_0=W_2=50$ and $W_1=$5, 25, 50, 100, 200, 500.
{\bf (d)} $W_0=W_1=50$ and $W_2=$5, 25, 50, 100, 200, 500.}
\label{fig:secanal:Wi}
\end{figure}
The change in efficacy is more pronounced for the cases where the weight associated
with infectious individuals $W_0$ change in comparison to the weights associated
with the controls $W_1=W_2$ (Figures~\ref{fig:secanal:W12} and \ref{fig:secanal:W0}).
Results are less sensitive to the variation between the weight controls $W_1$ and $W_2$
(Figures~\ref{fig:secanal:W1} and \ref{fig:secanal:W2}).


\section*{Acknowledgments}

This work was partially supported by the
Portuguese Foundation for Science and Technology (FCT)
through the: \emph{Centro de Matem\'{a}tica e Aplica\c{c}\~{o}es},
project PEst-OE/MAT/UI0297/2014 (Rodrigues);
\emph{Center for Research and Development in Mathematics and Applications} (CIDMA),
project PEst-OE/MAT/UI4106/2014 (Silva and Torres);
post-doc fellowship SFRH/BPD/72061/2010 (Silva);
project PTDC/EEI-AUT/1450/2012, co-financed by FEDER
under POFC-QREN with COMPETE reference FCOMP-01-0124-FEDER-028894 (Torres).



\end{document}